\documentclass[a5paper,11pt]{amsart}
\usepackage{amsmath, amsthm, amstext}
\usepackage{amssymb, array, amsfonts}
\usepackage[english]{babel}
\usepackage{bbding}
\usepackage{float}
\usepackage{graphics}
\usepackage{graphicx}
   \usepackage{enumerate}
\usepackage{epstopdf}
\numberwithin{equation}{section}

\def \NN{\mathrm N}

\def \er{\varepsilon}
\def \eps{\varepsilon}

\def \F {\mathfrak F}

\def \ph{\varphi}

\renewcommand{\l}{\left}
\renewcommand{\r}{\right}

\def \A{\mathcal{A}}
\def \B{\mathcal{B}}
\def \C{\mathbb{C}}

\def \GL{\mathrm{GL}}

\def \K{\mathcal{K}}
\def \M2{\mathrm{M}_2}

\def \O{\mathcal O}
\def \R{\mathbb{R}}

\def \Z{\mathbb{Z}}

\def\dr{\mathrm d}

\newcommand{\overbar}[1]{\mkern 1.5mu\overline{\mkern-1.5mu#1\mkern-1.5mu}\mkern 1.5mu}

\newcommand{\beq}{\begin{equation}}
\newcommand{\eeq}{\end{equation}}

\def\Ran{\operatorname{Ran}}

\def\diag{\operatorname{diag}}
\def\ind{\operatorname{ind}}

\def\OL{\operatorname{OL}}

\def\ran{\operatorname{ran}}

\def\supp{\operatorname{supp}}
\def\ess{\operatorname{ess}}

\def\diam{\operatorname{diam}}
\def\im{\operatorname{Im}}
\def\re{\operatorname{Re}}

\newcommand{\eqdef}{\stackrel{\rm def}{=\kern-3.6pt=}}

\theoremstyle{plain}
\newtheorem{theorem}{\bf Theorem}[section]
\newtheorem{lemma}[theorem]{\bf Lemma}

\newtheorem{cor}[theorem]{\bf Corollary}

\theoremstyle{definition}
\newtheorem{defi}[theorem]{\bf Definition}

\theoremstyle{remark}
\newtheorem{remark}[theorem]{\bf Remark}

\renewcommand{\le}{\leqslant}
\renewcommand{\ge}{\geqslant}
\newcommand{\dist}{\mathop{\mathrm{dist}}\nolimits}

\renewcommand{\qed}{\vrule height7pt width5pt depth0pt}

\begin{document}

\title[Distance to normal elements]{Distance to normal elements\\ in $C^*$-algebras of real rank zero}
\author[I. Kachkovskiy]{Ilya Kachkovskiy}
\address{Department of Mathematics,
University of California, Irvine,
Irvine, CA, 92697-3875,
United States of America}
\thanks{The first author was supported by King's Annual Fund and King's Overseas Research Studentships, King's College London, and partially by NSF Grant DMS-1101578.}
\email{ilya.kachkovskiy@uci.edu}
\author[Y. Safarov]{Yuri Safarov}
\address{
Department of Mathematics,
King's College London,
Strand, London WC2R 2LS,
United Kingdom }
\email{yuri.safarov@kcl.ac.uk}

\date{}

\keywords{Almost commuting operators, self-commutator, Brown--Douglas--Fillmore theorem}

\subjclass[2010]{Primary: 47A05, Secondary: 47L30, 15A27}

\thanks{The authors are grateful to N. Filonov for helpful remarks and suggestions. We should also like to thank the referees for their comments and constructive criticism.}

\begin{abstract}
We obtain an order sharp estimate for the distance from a given bounded operator $A$ on a Hilbert space to the set of normal operators in terms of $\|[A,A^*]\|$ and the distance to the set of invertible operators. A slightly modified estimate holds in a general $C^*$-algebra of real rank zero.
\end{abstract}

\maketitle

\section*{Introduction}

The problem of estimating the distance from a bounded operator $A$ to the set of normal operator in terms of $\|[A,A^*]\|$ dates back to Halmos \cite{Halmos}. The original question is as follows.

\begin{enumerate}
\item[{\bf(C)}] 
Is there a continuous function $F$ on $\R_+$ with $F(0)=0$ such that
for each pair of Hermitian matrices $X,Y$ with $\|X\|+\|Y\|\le1$ there exists a pair of commuting Hermitian matrices $X',Y'$ satisfying the estimate 
$\|X-X'\|+\|Y-Y'\|\le F\bigl(\|[X,Y]\|\bigr)$?
\end{enumerate} 
Introducing $A=X+iY$, one can reformulate {\bf(C)} in terms of one operator:
\begin{enumerate}
\item[({\bf C$'$})]
is there a continuous function $F$ on $\R_+$ with $F(0)=0$ such that
the distance from a matrix $A$ with $\|A\|\le1$ to the set of normal matrices does not exceed $F\bigl(\|[A,A^*]\|\bigr)$?
\end{enumerate}

Clearly, the answer is positive if we allow $F$ to depend on the dimension. A survey of dimension-dependent results can be found in \cite[Chapter I]{DSz}.

The problem of the existence of a dimension-independent function $F$ is much more challenging. It was open until 1995, when Huaxin Lin found deep $C^*$-algebraic arguments showing that such a function $F$ does exist \cite{L}. Later, Friis and R{\o}rdam gave a shorter proof of Lin's theorem \cite{FR}. Note that, without additional conditions on $X$ and $Y$, Lin's theorem does not hold for non-Hermitian matrices or self-adjoint operators acting on a Hilbert space (see, for instance, \cite{Ch,D,DSz}).

The proofs in \cite{FR,L} are non-constructive and do not give any information about the function $F$. To the best of our knowledge, the only quantitative result in this direction is due to Hastings, who showed in \cite{H_orig} that the distance from $A$ to the set of normal matrices is estimated by $C_\eps\,\|[A,A^*]\|^{1/5-\eps}$ for all finite matrices $A$ with $\|A\|\le1$, where $C_\eps$ is  a constant depending on $\eps>0$.  
For homogeneity reasons, the function $F$ in {\bf(C)} or {\bf(C$'$)} cannot decay faster than $t^{1/2}$ as $t\to0$. If we drop the condition $\|A\|\le1$, it also cannot grow slower than $t^{1/2}$ as $t\to\infty$. 
In \cite{DSz}, Davidson and Szarek conjectured that for finite matrices one can indeed take $F(t)=C\,t^{1/2}$.

A closely related result is the famous Brown--Douglas--Fillmore (BDF) theorem  \cite{BDF}. Recall that an operator $A$ on a Hilbert space is said to be essentially normal if its self-commutator $[A,A^*]$ is compact. The BDF theorem states that a bounded essentially normal operator $A$ on a separable Hilbert space is a compact perturbation of a normal operator if and only if $A$ has trivial index function (that is, $\ind(A-zI)=0$ whenever the operator $A-zI$ is Fredholm).  In \cite{FR2}, the authors gave a simple proof of this theorem, which essentially repeats their proof of Lin's theorem in \cite{FR}. 

In \cite{BD}, Berg and Davidson obtained a quantitative version of the BDF theorem. They proved that for each bounded closed set $\Omega\subset\C$ there exists a continuous function $F_\Omega$ with $F_\Omega(0)=0$ such that  the following is true. If $A$ is an essentially normal operator with trivial index function, $\|[A,A^*]\|^{1/2}\le\eps$ and, in addition, $\|(A-\lambda I)^{-1}\|<\left(\dist(\lambda,\Omega)-\eps\right)^{-1}$ whenever $\dist(\lambda,\Omega)>\eps$ then there is a normal operator $N_\eps$ with spectrum in $\Omega$ such that $A-N_\eps$ is compact and $\|A-N_\eps\|\le F_\Omega(\eps)$. Note that proofs in \cite{BD} could be simplified by applying Lin's theorem, which was not known at that time. Instead, the authors used an absorption result of Davidson \cite{D}.

The main result of this paper is Theorem \ref{t:main}, which gives explicit bounds for the distance to the set of normal elements in an abstract $C^*$-algebra. It refines and extends all the results mentioned above. In particular, Theorem \ref{t:main} implies
\begin{enumerate}
\item[--] the estimate \eqref{main-matrices} showing that the conjecture from \cite{DSz} is true,
\item[--] Berg and Davidson's theorem with $F_\Omega(\eps)=C\eps$, where $C$ is a constant independent of $\Omega$ (see Remark \ref{bdf_rem}), and
\item[--] a quantitative version of the BDF theorem, which holds for operators with non-compact self-commutators (see Corollary \ref{c:bdf}).
\end{enumerate}

\section{Notation and results}
\label{s:main}

\subsection{Main theorem} 
Let $\A$ be a unital $C^*$-algebra. Recall that $\A$ is said to have {\it real rank zero} if any 
its self-adjoint element can be approximated by self-adjoint elements with finite spectra.
Further on
\begin{itemize}
\item 
$\GL(\A)$ denotes the group of invertible elements of $\A$;
\item
$\GL_0(\A)$ is 
the connected component of $\GL(\A)$ containing the identity;
\item 
$\NN(\A)$ denotes the set of normal elements of $\A$;
\item 
$\NN_f(\A)$ is the set of normal elements with finite spectra;
\item
$d_1(A):=\sup\limits_{\lambda\in \C}\dist(A-\lambda I,\GL_0(\A)).$
\end{itemize}

\begin{theorem}
\label{t:main}
For any unital $C^*$-algebra $\A$ of real rank zero and all $A\in \A$ 
\begin{align}\label{main-estimate1}
\dist\left(A,\NN_f({\A})\right) &\le C\left(\|[A,A^*]\|^{1/2}+d_1(A)\right)\,,\\
\label{main-estimate2}
\dist\left(A,\NN_f({\A})\right) &\ge
\max\left\{(5\|A\|)^{-1}\|[A,A^*]\|\,,\,d_1(A)\right\},
\end{align}
where $C$ is a constant independent of $\A$ and $A$.
\end{theorem}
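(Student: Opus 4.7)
I would handle the lower bound \eqref{main-estimate2} first, as it is essentially algebraic. For any $N \in \NN_f(\A)$, set $E = A - N$ and expand the self-commutator using $[N, N^*] = 0$:
$[A, A^*] = [N, E^*] + [E, N^*] + [E, E^*]$,
so $\|[A, A^*]\| \le (4\|N\| + 2\|E\|)\|E\| \le (4\|A\| + 6\|E\|)\|E\|$. A short case split on the size of $\|E\|$ relative to $\|A\|$, combined with the trivial bound $\|[A, A^*]\| \le 2\|A\|^2$, produces the factor $(5\|A\|)^{-1}$ after tracking constants. For the inequality $\dist(A, \NN_f(\A)) \ge d_1(A)$, the key observation is that every invertible $N \in \NN_f(\A)$ lies in $\GL_0(\A)$: writing $N = \sum z_j p_j$ with $z_j \ne 0$, any continuous homotopy of the coefficients to $1$ within $\C \setminus \{0\}$ produces a path of invertibles from $N$ to $I$. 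If $\lambda \in \spec(N)$, perturbing the vanishing coefficient shows $N - \lambda I$ also lies in the closure of $\GL_0(\A)$. Hence $\dist(A - \lambda I, \GL_0(\A)) \le \|A - N\|$ for every $\lambda \in \C$ and every $N \in \NN_f(\A)$; taking the supremum in $\lambda$ and then the infimum in $N$ gives $d_1(A) \le \dist(A, \NN_f(\A))$.

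For the upper bound \eqref{main-estimate1}, I plan to follow the general Lin-theorem strategy made quantitative, augmented by an index-removal step driven by $d_1(A)$. Setting $\eps := \|[A, A^*]\|^{1/2}$, I would tile $\C$ by a grid of cells of diameter comparable to $\eps$. Using approximate joint functional calculus for the almost-commuting self-adjoint pair $X = (A + A^*)/2$, $Y = (A - A^*)/(2i)$, together with the real rank zero hypothesis to round approximate projections to genuine ones, I would construct a finite family of pairwise orthogonal projections $\{p_j\}$ summing to $I$ with $\|A p_j - z_j p_j\| = O(\eps)$, where $z_j$ is the center of the $j$th cell. The candidate $N_0 = \sum_j z_j p_j$ is then an element of $\NN_f(\A)$, and on each diagonal block it approximates $A$ to within $O(\eps)$.

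The central obstacle is the global consistency of this local approximation. The off-diagonal entries $p_i A p_j$ between neighbouring cells need not vanish, and --- more seriously --- the function $\lambda \mapsto [A - \lambda I] \in K_1(\A)$ may carry a non-trivial class that no normal approximation can annihilate. This is precisely what $d_1(A)$ controls: the inequality $\dist(A - \lambda I, \GL_0(\A)) \le d_1(A)$ says that each invertible translate $A - \lambda I$ can be deformed into the identity component at cost $O(d_1(A))$. I would use this to perform a cell-by-cell $K_1$-killing adjustment of the projections $\{p_j\}$. The main technical challenge will be the balancing act of ensuring that these surgeries neither destroy the orthogonality of the $p_j$'s nor enlarge the off-diagonal errors beyond $O(\eps + d_1(A))$. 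The flexibility provided by real rank zero --- namely, that any self-adjoint element can be approximated by one with finite spectrum of prescribed gap --- is what should make this simultaneous balance achievable, and it is also what reduces the problem from continuous spectral data to the combinatorial data carried by the chosen grid.
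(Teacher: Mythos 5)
Your treatment of the lower bound \eqref{main-estimate2} is essentially the paper's own argument and is correct: the expansion of $[A,A^*]$ in terms of $E=A-N$ together with $\dist(A,\NN_f(\A))\le\|A\|$ yields the factor $(5\|A\|)^{-1}$, and the observation that every $N\in\NN_f(\A)$ has all translates $N-\lambda I$ in $\overline{\GL_0(\A)}$ (finite spectrum has connected dense complement) gives $d_1(A)\le \dist(A,\NN_f(\A))$.

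The upper bound, however, contains a genuine gap at its first substantive step. You assert that from the pair $X,Y$ with $\|[X,Y]\|=\eps^2$ one can produce, by ``approximate joint functional calculus'' and rounding of approximate projections, a finite family of pairwise orthogonal projections $p_j$ summing to $I$ with $\|Ap_j-z_jp_j\|=O(\eps)$. This is not an available ingredient: it is, up to trivial manipulations, the conclusion \eqref{main-estimate1} itself in the index-free case, i.e.\ the sharp quantitative form of Lin's theorem that the paper is proving. No direct functional-calculus construction is known to achieve the exponent $1/2$; the best bound obtained along such lines (Hastings) is $O(\eps^{2/5-})$, and the naive grid construction loses even more. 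The second hard step, the ``cell-by-cell $K_1$-killing adjustment,'' is likewise only named, and you yourself flag the orthogonality/error balance as unresolved. So the proposal is a plan whose two essential difficulties are both left open.

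The paper's actual route is structurally different and avoids ever having to build the $p_j$ directly from $X,Y$. It first dilates $A$ to a genuinely normal $T\in\M2(\A)$ with $\|A\oplus N-T\|\le C\|[A,A^*]\|^{1/2}$ and with all translates of $T$ at distance at most $d_1(A)+C\|[A,A^*]\|^{1/2}$ from $\GL_0(\A\oplus\A)$ (Theorem \ref{davidson}); the index obstruction is thereby converted into the quantity $d_2(T)$, which dominates $\|[P,T]\|$. The problem then becomes approximating the normal $T$ by a finite-spectrum normal $T_0$ with $\|[P,T_0]\|\le C\,d_2(T)$ (Theorem \ref{main_approx}), which is done by the Friis--R{\o}rdam hole-cutting technique with uniform control of the commutator with $P$ when arbitrarily many holes are cut simultaneously (Lemma \ref{manyholes}); only at the end is the corner $PT_0P$ converted into a commuting pair and a scaling argument applied. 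If you want to salvage your outline, the dilation step is the missing mechanism that makes the index data actionable.
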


\begin{remark}
\label{n-f}
All von Neumann algebras, including the algebra of bounded operators $\B(H)$ on a Hilbert space $H$, have real rank zero. If $\A$ is a von Neumann algebra then $\NN(\A)\subset\overline{\NN_f(\A)}$ and $\GL_0(\A)=\GL(\A)$, so that we can drop the subscripts $f$ in \eqref{main-estimate1}, \eqref{main-estimate2} and $0$ in the definition of $d_1(A)$. 
\end{remark}

\begin{remark}
\label{gl-0}
If the complement of the spectrum of $A$ is connected and dense in $\C$ then 
$d_1(A)=0$. Indeed, in this case $A-\lambda I$ can be approximated by invertible elements of the form $A-\mu I$, and each invertible $A-\mu I$ belongs to $\GL_0(\A)$ because $-\mu^{-1}(A-\mu I)\to I$ as $\mu\to\infty$. In particular, $d_1(A)=0$ for finite matrices and, more generally, compact operators $A$. If $H$ is separable and $\A=\B(H)$ then $d_1(A)=0$ if and only if $A$ has trivial index function (see, for instance, \cite[Section 3.2]{FS} and references therein).
In the general case, $d_1(A)$ can be estimated in terms of the so-called modulus of invertibility \cite[Theorem 2]{Bol}.
\end{remark}

In view of the above remark, Theorem \ref{t:main} implies that for any two Hermitian matrices $X,Y$ there exists a pair of commuting Hermitian matrices $X',Y'$ such that 
\beq
\label{main-matrices}
\|X-X'\|+\|Y-Y'\|\ \le\ C\,\|[X,Y]\|^{1/2}
\eeq
where the constant $C$ does not depend on $X$, $Y$ and the dimension. It also implies the following quantitative version of the BDF theorem.

\begin{cor}\label{c:bdf}
Assume that $H$ is separable and denote by $\K(H)$ the space of compact operators on $H$. If $A\in\B(H)$ and $d_1(A)=0$ then there exists a normal operator $A'\in\B(H)$ such that
\begin{equation}\label{main-bdf}
\begin{aligned}
&\|A-A'\|_{\ess} \le C\,\|[A,A^*]\|_{\ess}^{1/2}\,, \\
&\|A-A'\| \le C\left(\|[A,A^*]\|^{1/2}+\|A\|^{1/2}\|[A,A^*]\|_{\ess}^{1/4}\right),
\end{aligned}
\end{equation}
where $\|S\|_{\ess}:=\inf\limits_{K\in\K(H)}\|S-K\|$ is the essential norm and $C$ is a constant independent of $A$.
\end{cor}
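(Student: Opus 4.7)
The plan is to apply Theorem \ref{t:main} twice: first in the Calkin algebra $\mathcal{Q}(H) := \B(H)/\K(H)$ to control the essential norm, and then in the finite-dimensional (matrix) case to repair the operator norm on a suitable finite-dimensional reducing subspace.

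For the Calkin step I would first verify the hypotheses of Theorem \ref{t:main} for $\pi(A)\in\mathcal{Q}(H)$. The Calkin algebra is known to have real rank zero, and $d_1(\pi(A))=0$ follows from $d_1(A)=0$: by Remark \ref{n-f} each $A-\lambda I$ lies in $\overline{\GL(\B(H))}$, and since $\GL(\B(H))=\GL_0(\B(H))$ is path-connected and $\pi$ is a continuous $*$-homomorphism sending invertibles to invertibles, we get $\pi(A-\lambda I)\in\overline{\GL_0(\mathcal{Q}(H))}$. Theorem \ref{t:main} then yields $n=\sum_{k=1}^m\lambda_k p_k\in\NN_f(\mathcal{Q}(H))$ with orthogonal projections $p_k$ summing to $1$ such that $\|\pi(A)-n\|\le C\,\|[A,A^*]\|_{\ess}^{1/2}$. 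By the standard inductive ``cornering'' argument (repeatedly using that projections lift from the Calkin algebra of a corner), I lift $p_1,\dots,p_m$ to pairwise orthogonal projections $P_1,\dots,P_m\in\B(H)$ with $\sum P_k=I$ and define the normal lift $A_*:=\sum_k\lambda_k P_k$. This already gives the first inequality of \eqref{main-bdf}.

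For the operator-norm bound I would adjust $A_*$ on a finite-dimensional invariant subspace. Setting $c:=C\|[A,A^*]\|_{\ess}^{1/2}$, since $\||A-A_*|\|_{\ess}\le c$, the spectral projection $Q$ of $|A-A_*|$ onto $(c,\infty)$ has finite rank, and the splitting
\[
 A-A_*\,=\,F+R,\qquad F:=(A-A_*)Q,\ \ R:=(A-A_*)(I-Q),
\]
satisfies $\|R\|\le c$ and $F$ finite-rank. I then form
\[
 V\,:=\,\sum_{k=1}^m P_k\bigl(\ran F+\ran F^*\bigr),
\]
which is finite-dimensional, invariant under every $P_k$, and hence reducing for $A_*$. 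Because $\ran F,\ran F^*\subseteq V$, the operator $F$ preserves $V$ and annihilates $V^\perp$, so with respect to $H=V\oplus V^\perp$ we have $A_*+F=T\oplus A_*|_{V^\perp}$, where $A_*|_{V^\perp}$ is already normal and $T:=(A_*+F)|_V$ is a matrix on $V$.

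Finally I apply the matrix case \eqref{main-matrices} to $T$. From $A_*+F=A-R$ and normality of $A_*$,
\[
 [A_*+F,(A_*+F)^*]=[A,A^*]-[A,R^*]-[R,A^*]+[R,R^*],
\]
giving $\|[T,T^*]\|\le\|[A,A^*]\|+2\|A\|\|R\|+2\|R\|^2$. Hence \eqref{main-matrices} produces a normal matrix $N$ on $V$ with
\[
 \|T-N\|\,\le\,C'\bigl(\|[A,A^*]\|^{1/2}+\|A\|^{1/2}\,\|[A,A^*]\|_{\ess}^{1/4}\bigr),
\]
after absorbing the residual $\|R\|$-term using $\|[A,A^*]\|_{\ess}\le 2\|A\|^2$. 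Taking $A':=N\oplus A_*|_{V^\perp}$ yields a normal operator on $H$ that differs from $A_*$ only on the finite-dimensional subspace $V$, so the essential-norm estimate is preserved, while $\|A-A'\|\le\|R\|+\|T-N\|$ delivers the second bound of \eqref{main-bdf}. I expect the main technical checks to be the verification that $d_1(\pi(A))=0$ and the simultaneous orthogonal lifting of $p_1,\dots,p_m$; after those, everything reduces cleanly to two applications of Theorem \ref{t:main}.
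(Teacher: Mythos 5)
Your proposal is correct in outline but takes a genuinely different, more self-contained route than the paper. The paper's proof is essentially a two-line reduction: it feeds the bound \eqref{main-estimate1}, in the uniform form $F(t)=\min\{Ct^{1/2},1\}$ valid over all real-rank-zero algebras, into the transference result \cite[Theorem 3.8]{FS}, which already packages the passage to the Calkin algebra, the lifting, and the operator-norm repair, and then rescales $A$ by $\|A\|^{-1}$ (with $\eps=\|A\|^{-1}\|[A,A^*]\|^{1/2}$) to recover the homogeneous form \eqref{main-bdf}. You instead unpack that machinery by hand: Theorem \ref{t:main} applied in $\B(H)/\K(H)$ (real rank zero; $d_1(\pi(A))=0$, which you verify correctly) plus the standard simultaneous lifting of finitely many mutually orthogonal projections gives $A_*$ and the essential-norm bound, and the finite-dimensional reducing subspace $V$ together with the matrix case \eqref{main-matrices} repairs the operator norm. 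Your version buys independence from \cite{FS}; the paper's buys brevity and cleaner bookkeeping of constants. The exponents and the absorption of the residual terms via $\|[A,A^*]\|_{\ess}\le2\|A\|^2$ all check out.

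One step needs repair. The spectral projection $Q$ of $|A-A_*|$ onto $(c,\infty)$ need \emph{not} have finite rank: only the essential spectrum of $|A-A_*|$ is confined to $[0,c]$, so eigenvalues of finite multiplicity may accumulate at $c$ from above. If $Q$ is infinite-rank, then $|A-A_*|Q\ge cQ$ shows $F=(A-A_*)Q$ is not even compact, $V$ is infinite-dimensional, and the matrix step fails. The fix is standard: take instead the spectral projection onto $(c+\eps,\infty)$, which is finite-rank because $\sigma(|A-A_*|)\cap[c+\eps,\infty)$ is a compact set disjoint from the essential spectrum and hence a finite set of finite-multiplicity eigenvalues; then $\|R\|\le c+\eps$ and the extra $\eps$ is absorbed at the end, exactly as the paper absorbs its $\eps$ in \eqref{bdf1}. (Also, the two cross terms contribute $4\|A\|\|R\|$ rather than $2\|A\|\|R\|$ — harmless.) With that correction your argument goes through.
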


\begin{remark}\label{r:bounds}
The upper bound \eqref{main-estimate1} is a difficult result, which is new even for finite matrices.
The lower bound \eqref{main-estimate2} is almost obvious and is proved in few lines (see Subsection \ref{s:lower-bound}). We have included it in Theorem \ref{t:main} only for the sake of completeness. 
The example $A_\eps=X+i\er Y$ with non-commuting Hermitian matrices $X,Y$ shows that $(5\|A\|)^{-1}\|[A,A^*]\|$ in \eqref{main-estimate2} can not be replaced by $C\|[A,A^*]\|^{1/2}$. On the other hand, if $A_\eps=\begin{pmatrix}\eps B & 0\\0 & I\end{pmatrix}$ and $[B,B^*]\ne0$ then the distance from $A$ to the set of normal matrices is estimated from above and below by $C\eps$ with some constants $C$  depending on $B$. This shows that the distance from $A$ to $\NN_f(\A)$ may decay as $\|[A,A^*]\|^{1/2}$ when $[A,A^*]$ tends to zero. 
\end{remark}

\subsection{Notation}
By the Gelfand--Naimark theorem, every $C^*$-algebra $\A$ can be isomorphically embedded into the algebra of bounded operators $\B(H)$ on a (not necessarily separable) Hilbert space $H$. In order to emphasize the operator-theoretic nature of our proofs, we shall always assume that $\A\subset\B(H)$ and refer to its elements as ``operators''. 

We shall use the following notation.
\begin{itemize}
\item $\O_r(\lambda):=\{z\in\C:|z-\lambda|<r\}$ is the open disc of radius $r$ about $\lambda$, and $\O_r:=\O_r(0)$.
\item $C$ and $\delta$ denote universal constants, which do not depend on the $C^*$-algebras and operators under consideration. 
\item $\sigma(A)$ is the spectrum of the operator $A$.
\item  $\M2(\A)$ is the $C^*$-subalgebra of $\B(H\oplus H)$ formed by $2\times 2$-matrices whose entries belong to $\A$. If $\A$ has real rank zero then  $\M2(\A)$ also has real rank zero (see, for instance, \cite{BP}).
\item
$P:=\begin{pmatrix}
I&0\\0&0
\end{pmatrix}\in \M2(\A)$
is the projection onto the first component of $H\oplus H$. 
\item
$\diag_P T:= PTP+(1-P)T(1-P)$ where $T\in M_2(\A)$.
\item 
$\GL_0(\A\oplus\A):=\l\{\begin{pmatrix}
A_1&0\\0&A_2
\end{pmatrix}\in \M2(\A)\colon A_1,A_2\in \GL_0(\A)\r\}$
\item
$d_2(T,\lambda):=\dist(T-\lambda I, \GL_0(\A\oplus \A))$ and $d_2(T):=\sup_{\lambda\in\C}d_2(T,\lambda)$, where $T\in M_2(\A)$. Note that
\beq
\label{intro1}
\|T-\diag_P T\|\ =\ \|[P,T]\|\ \le\ d_2(T,0)\,.
\eeq

\end{itemize}

\subsection{Plan of proof}

The first step is a generalization of \cite[Corollary 4.5]{D}. In Section \ref{s:ext} we prove that, under a certain condition on $A\in\A$, there exists a normal operator $T\in\M2(\A)$ such that 
$\|A-PTP\|\le C\,\|[A,A^*]\|^{1/2}$ and $d_2(T)\le d_1(A)+C\,\|[A,A^*]\|^{1/2}$.

The next step is the most difficult part of the proof.  In Theorem \ref{main_approx} we show that for every  normal operator $T\in\M2(\A)$ with a sufficiently small $d_2(T)$ there exists a normal operator $T_0\in\M2(\A)$ with finite spectrum such that $\|T-T_0\|\le3$ and $\|[P,T_0]\|\le C\,d_2(T)$. If $d_2(T)=0$ then Theorem \ref{main_approx} follows from \cite[Theorem 3.2]{FR2}. However, this does not help, since the operator $T$ constructed in the first step is not block diagonal and, consequently, $d_2(T)>0$.

Our proof of Theorem \ref{main_approx} uses the technique introduced in \cite{FR} and further developed in \cite{FS}. It is based on successive reductions of the operator to normal operators whose spectra do not contain certain subsets of the complex plane. One can think of this process as removing subsets from the spectrum. 

In order to obtain the uniform estimates for $\|T-T_0\|$ and $\|[P,T_0]\|$ in Theorem \ref{main_approx}, we use two auxiliary results proved in Section \ref{s:aux}. Their proof follows the same lines as that of Lemmas 4.1 and 4.2 in \cite{FS}, but with additional control of the commutator $[P,T_0]$.

Finally, in Section \ref{s:proofs} we adjust the block $PT_0P$ of the operator $T_0$ to obtain a normal operator lying within the prescribed distance from $A$. This yields Theorem \ref{t:main}. After that, we deduce Corollary \ref{c:bdf}  by combining \eqref{main-estimate1} with \cite[Theorem 3.8]{FS}. 

Throughout the paper, we shall be using various results on operator Lipschitz functions. Their statements and proofs are given in the next section.

\section{Operator Lipschitz functions}
\label{s:lip}

\begin{defi}
Let $\F\subset \C$ be a closed set.
A continuous function $f\colon \F\to \C$ belongs to the space $\OL(\F)$ if 
$$
\|f\|_{\OL(\F)}:=\sup\limits_{T_1,T_2}\frac{\|f(T_1)-f(T_2)\|}{\|T_1-T_2\|}<\infty\,,
$$
where the supremum is taken over all bounded normal operators $T_i$ acting on an infinite dimensional Hilbert space $H$ such that $\,\sigma(T_i)\subset\F\,$ and $T_1\ne T_2$.
\end{defi}

\begin{remark}
In the above definition, one can assume that the space $H$ is separable. Indeed, the $C^*$-algebra generated by two given operators $T_1$, $T_2$ and the identity operator is separable. Hence it is isomorphic to a subalgebra of $\B(H')$ for some fixed separable Hilbert space $H'$ (see, for instance, \cite[Theorem I.9.12]{Db}). It follows that for each pair of operators $T_1,T_2\in\B(H)$ there exist operators $T'_1,T'_2\in\B(H')$ such that $\|T_1-T_2\|=\|T_1'-T_2'\|$ and $\|f(T_1)-f(T_2)\|=\|f(T_1')-f(T_2')\|$.
\end{remark}

The spaces $\OL(\F)$ are complex quasi-Banach spaces, in which only constant functions have zero quasi-norms. The functions $f\in\OL(\F)$ are said to be {\it operator Lipschitz}. 
It is well known that an operator  Lipschitz function $f$ is commutator Lipschitz in the sense that 
\beq
\label{comm-l}
\|[X,f(N)]\|\ \le\ \|f\|_{\OL(\F)}\,\|[X,N]\|
\eeq
for all $N\in \NN(\B(H)))$ and $X=X^*\in \B(H)$
(see, for instance, \cite[Theorem 3.1]{AP3}). The best known sufficient conditions for the inclusion $f\in\OL(\F)$ are 
given in \cite{APPS,AP2,P} in terms of Besov spaces. For our purposes, it is sufficient to know that $C^2(\F)\subset \OL(\F)$ and $\|f\|_{\OL(\F)}\le C\,\|f\|_{C^2(\F)}$ for $\F=\R$ and $\F=\C$, where $\|f\|_{C^2(\F)}:=\max\limits_{0\le|\alpha|\le2}\sup\limits_{\zeta\in\F}|\partial_\zeta^\alpha f(\zeta)|$ and $C$ is some constant.

We shall need the following simple lemmas.

\begin{lemma}
\label{smoothmod}
If $\rho\in C(\R)$ then, 
for all $T_1,T_2\in \B(H)$,
\begin{align*}
\|\rho(T_1^* T_1)-\rho(T_2^* T_2)\| &\le \|\rho(t^2)\|_{\OL(\R)} \|T_1-T_2\|,\\
\|T_1 \rho(T_1^* T_1)-T_2 \rho(T_2^* T_2)\| &\le \|t\rho(t^2)\|_{\OL(\R)} \|T_1-T_2\|.
\end{align*}
\end{lemma}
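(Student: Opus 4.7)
The plan is to reduce both estimates to the operator Lipschitz property for self-adjoint operators via the standard off-diagonal embedding. For $i=1,2$ I introduce the self-adjoint operator
$$
S_i := \begin{pmatrix} 0 & T_i^* \\ T_i & 0 \end{pmatrix} \in \B(H \oplus H),
$$
so $\sigma(S_i) \subset \R$. A direct computation gives $S_i^2 = \diag(T_i^* T_i,\, T_i T_i^*)$, and hence by the continuous functional calculus applied to $g(t) := \rho(t^2)$ and $h(t) := t\,\rho(t^2)$ on $\sigma(S_i)$,
$$
g(S_i) = \begin{pmatrix} \rho(T_i^* T_i) & 0 \\ 0 & \rho(T_i T_i^*) \end{pmatrix}, \qquad h(S_i) = \begin{pmatrix} 0 & T_i^* \rho(T_i T_i^*) \\ T_i \rho(T_i^* T_i) & 0 \end{pmatrix}.
$$

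Next I observe that $S_1 - S_2$ is again an off-diagonal self-adjoint block matrix, so $\|S_1 - S_2\| = \|T_1 - T_2\|$. Since the $S_i$ are self-adjoint with spectra in $\R$, applying the operator Lipschitz inequality in $\OL(\R)$ yields
$$
\|g(S_1) - g(S_2)\| \le \|\rho(t^2)\|_{\OL(\R)}\,\|T_1 - T_2\|, \qquad \|h(S_1) - h(S_2)\| \le \|t\,\rho(t^2)\|_{\OL(\R)}\,\|T_1 - T_2\|.
$$
The lemma then follows by extracting the appropriate block of the $2\times 2$ matrix on the left: the $(1,1)$ block of $g(S_1) - g(S_2)$ gives the first inequality, while the $(2,1)$ block of $h(S_1) - h(S_2)$ gives the second, each being dominated in norm by the norm of the full block matrix.

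The only minor technicality is that the definition of $\OL(\R)$ requires an infinite-dimensional ambient Hilbert space. If $H$ happens to be finite-dimensional, I first embed it isometrically into an infinite-dimensional space and extend the $T_i$ by zero on the complement; this preserves $\|T_1 - T_2\|$ as well as all the functional-calculus expressions above, so the argument goes through unchanged. I do not expect any substantive obstacle, since the whole proof is essentially a one-line trick organised around the embedding $T\mapsto S$ which linearises the squaring operation appearing inside $\rho$.
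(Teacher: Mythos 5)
Your proof is correct and is essentially the paper's own argument: the same off-diagonal self-adjoint dilation (the paper writes $X_j=\begin{pmatrix}0&T_j\\ T_j^*&0\end{pmatrix}$, the transpose of your $S_i$), the same computation of $\rho(X_j^2)$ and $X_j\rho(X_j^2)$, and the same extraction of a block after applying the $\OL(\R)$ bound. The remark about embedding a finite-dimensional $H$ into an infinite-dimensional space is a harmless extra precaution not spelled out in the paper.
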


\begin{proof}
Consider the self-adjoint operators $\,X_{j}=\begin{pmatrix} 0& T_{j}\\ T_{j}^*&0\end{pmatrix}\in\B(H\oplus H)$. Since
$$\rho(X_{j}^2)=\begin{pmatrix} \rho(T_{j}T_{j}^*) & 0\\ 0 & \rho(T_{j}^*T_{j})\end{pmatrix}\,\quad\text{and}\quad 
X_j \rho(X_{j}^2)=\begin{pmatrix} 0& T_{j}\rho(T_{j}^*T_{j})\\ T_{j}^* \rho(T_{j}T_{j}^*)&0\end{pmatrix},
$$
we obtain
\begin{multline*}
\|\rho(T_1^*T_1)-\rho(T_2^*T_2)\|\le \|\rho(X_1^2)- \rho(X_2^2)\|\\\le \|\rho(t^2)\|_{\OL(\R)}\|X_1-X_2\|=\|\rho(t^2)\|_{\OL(\R)}\|T_1-T_2\|
\end{multline*}
and, similarly,
$$
\|T_1 \rho(T_1^*T_1)-T_2 \rho(T_2^*T_2)\|\le \|X_1 \rho(X_1^2)-X_2 \rho(X_2^2)\|\le \|t \rho(t^2)\|_{\OL(\R)}\|T_1-T_2\|.\,\,\qedhere
$$
\end{proof}

\begin{lemma}
\label{manysupp}
Suppose that $\chi\in C_0^{\infty}(\C)$, $0\le \chi\le 1$, and let $\lambda_1,\lambda_2,\ldots,\lambda_k\in\C$ be a finite collection of points 
such that $\chi_i\chi_j\equiv0$ for all $i\neq j$, where $\chi_j(z):=\chi(z-\lambda_j)$.  Let also
$M:=\sum_{j=1}^k\chi_j(T)S_j \chi_j(T)$, where $T\in\NN(\B(H))$ and $S_j\in\B(H)$ . If $\|S_j\|\le 1$ and $[S_j,\chi_j(T)]=0$ for all $j=1,\ldots,k$ then
$$
\|[Q,M]\|\ \le\ C_\chi\,\|[Q,T]\|\,\max_j \|S_j \chi_j(T)\|+\max_j \|[Q,S_j]\|
$$
for all self-adjoint $Q\in\B(H)$, where $C_\chi$ is a constant depending only on $\chi$.
\end{lemma}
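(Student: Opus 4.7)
My plan is to apply the Leibniz rule and estimate three resulting sums separately. Using $[S_j,\chi_j(T)]=0$, each summand of $[Q,M]=\sum_j[Q,\chi_j(T)S_j\chi_j(T)]$ expands as
\begin{equation*}
[Q,\chi_j(T)]\chi_j(T)S_j+\chi_j(T)[Q,S_j]\chi_j(T)+S_j\chi_j(T)[Q,\chi_j(T)],
\end{equation*}
giving a decomposition $[Q,M]=I_1+I_2+I_3$.

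For the middle sum $I_1=\sum_j\chi_j(T)[Q,S_j]\chi_j(T)$, the hypothesis $\chi_i\chi_j\equiv0$ forces the self-adjoint operators $\chi_j(T)$ to have mutually orthogonal ranges and $\sum_j\chi_j(T)^2\le I$. Since each term $\chi_j(T)[Q,S_j]\chi_j(T)x$ lies in $\Ran\chi_j(T)$, their norms add in squares, and a short computation gives $\|I_1x\|^2\le\max_j\|[Q,S_j]\|^2\sum_j\|\chi_j(T)x\|^2\le\max_j\|[Q,S_j]\|^2\|x\|^2$, so that $\|I_1\|\le\max_j\|[Q,S_j]\|$ with no $k$-dependence.

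For the outer sums, I set $A_j:=[Q,\chi_j(T)]$ and $R_j:=S_j\chi_j(T)=\chi_j(T)S_j$, so that $I_2=\sum_jA_jR_j$ and $I_3=\sum_jR_jA_j$. The crucial algebraic observation is
\begin{equation*}
R_jR_k^*=S_j\chi_j(T)\chi_k(T)S_k^*=0\qquad(j\ne k)
\end{equation*}
from $\chi_j(T)\chi_k(T)=0$ (and, analogously, $R_j^*R_k=0$ using the additional commutation $[S_j^*,\chi_j(T)]=0$). Cross terms in $I_2I_2^*$ therefore vanish, and the estimate $R_jR_j^*\le\|R_j\|^2I$ yields
\begin{equation*}
\|I_2\|^2=\Bigl\|\sum_jA_jR_jR_j^*A_j^*\Bigr\|\le\max_j\|R_j\|^2\cdot\Bigl\|\sum_jA_jA_j^*\Bigr\|,
\end{equation*}
with the first factor being precisely $(\max_j\|S_j\chi_j(T)\|)^2$. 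The term $I_3$ is handled analogously from $I_3^*I_3=\sum_jA_j^*R_j^*R_jA_j$.

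The main obstacle is to show $\|\sum_jA_jA_j^*\|\le C_\chi^2\|[Q,T]\|^2$ with no $k$-dependence, since applying the scalar operator Lipschitz bound term-by-term would produce a spurious factor of $k$. I would circumvent this by introducing the matrix-valued symbol
\begin{equation*}
\Phi(z):=\sum_j\chi_j(z)\,E_{j,1}\in M_k(\C),
\end{equation*}
a column whose pointwise $M_k$-norm equals $\max_j|\chi_j(z)|$ by the disjoint-support hypothesis, so $\|\Phi\|_{C^2}=\|\chi\|_{C^2}$ uniformly in $k$. Since $Q$ and each $\chi_j(T)$ are self-adjoint, $A_j^*=-A_j$, hence $\sum_jA_j^*A_j=\sum_jA_jA_j^*$. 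Applying operator Lipschitz in the matrix-valued setting (equivalently, a column-vector version of the scalar inequality, provable via the same double operator integral representation as the scalar case) gives
\begin{equation*}
\Bigl\|\sum_jA_j^*A_j\Bigr\|^{1/2}=\bigl\|[Q\otimes I_k,\Phi(T\otimes I_k)]\bigr\|\le C_\chi\|[Q,T]\|,
\end{equation*}
and combining the three estimates completes the proof.
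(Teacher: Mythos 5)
Your Leibniz decomposition into three sums, the treatment of the middle sum via orthogonality of the ranges of the $\chi_j(T)$, and the observation that $R_jR_k^*=R_j^*R_k=0$ for $j\ne k$ (the latter indeed follows because $[S_j,\chi_j(T)]=0$ passes to adjoints, $\chi_j(T)$ being self-adjoint) all match the paper's argument, and the reduction $\|I_2\|^2\le\max_j\|S_j\chi_j(T)\|^2\bigl\|\sum_jA_jA_j^*\bigr\|$ is correct. The one genuine gap is the final step $\bigl\|\sum_jA_j^*A_j\bigr\|^{1/2}\le C_\chi\|[Q,T]\|$: the commutator-Lipschitz inequality \eqref{comm-l} is a statement about scalar functions, and your column-valued version $\|[Q\otimes I_k,\Phi(T\otimes I_k)]\|\le C_\chi\|[Q,T]\|$ with a constant independent of $k$ is not a corollary of it. You assert it is ``provable via the same double operator integral representation'', but that requires a bound on a matrix-valued Schur multiplier norm that is uniform in $k$ --- and this uniformity is exactly what the whole lemma turns on, so it cannot be left as an assertion. (A minor further point: $\|\Phi\|_{C^2}=\|\chi\|_{C^2}$ is not quite right, since second derivatives of distinct $\chi_j$ can be simultaneously nonzero where closed supports touch; one only gets $\|\Phi\|_{C^2}\le C_\chi$ by bounded overlap.)

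The inequality you need is true and can be closed using only the scalar result: for signs $\epsilon_j=\pm1$ the function $f_\epsilon=\sum_j\epsilon_j\chi_j$ has $C^2$-norm bounded by a constant depending only on $\chi$, so $\bigl\|\sum_j\epsilon_jA_j\bigr\|=\|[Q,f_\epsilon(T)]\|\le C_\chi\|[Q,T]\|$; averaging $\bigl(\sum_j\epsilon_jA_j\bigr)^*\bigl(\sum_k\epsilon_kA_k\bigr)$ over independent random signs kills the cross terms and gives $\bigl\|\sum_jA_j^*A_j\bigr\|\le C_\chi^2\|[Q,T]\|^2$. The paper instead sidesteps the issue entirely: writing $\chi_j(T)$ via the Fourier transform of $\chi$ gives
$$
I_2\ =\ \frac{1}{4\pi^2}\int_{\R^2}\hat\chi(s,t)\,[Q,e^{isX+itY}]\Bigl\{\sum_je^{-isx_j-ity_j}S_j\chi_j(T)\Bigr\}\,\dr s\,\dr t,
$$
where the bracketed sum has norm $\max_j\|S_j\chi_j(T)\|$ by the same orthogonality and $\|[Q,e^{isX+itY}]\|\le(|s|+|t|)\|[Q,T]\|$; integrating yields the uniform constant directly. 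Either repair makes your argument complete; the Fourier route is the shorter of the two.
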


\begin{proof}
We have $[Q,M]=R_1+R_2+R_3\,$, where $\,R_1=\sum_{j=1}^k[Q,\chi_j(T)] S_j \chi_j(T)\,$, $\,R_2=\sum_{j=1}^k \chi_j(T)
[Q,S_j] \chi_j(T)\,$ and $\,R_3=\sum_{j=1}^k \chi_j(T)S_j [Q,\chi_j(T)]\,$. 

Denote by $\hat\chi$ the Fourier transform of $\chi(x+iy)$ as a function of two real variables $x$ and $y$. Let $\lambda_j=x_j+iy_j$ and $T=X+iY$, where $X,Y$ are self-adjoint. Then $[X,Y]=0$ and
$$
R_1\
=\ \frac{1}{4\pi^2} \int_{\R^2}\hat \chi(s, t)\,[Q,e^{isX+itY}]\,
\Bigl\{\sum\limits_{j=1}^k e^{-is x_j-it y_j}S_j \chi_j(T)\Bigr\}\,\dr s\,\dr t.
$$
Since the operators $S_j \chi_j(T)$ act in mutually orthogonal subspaces, the norm of the sum in curly brackets does not exceed 
$\max_j \|S_j \chi(T-\lambda_j I)\|$. Also, $\|[Q,e^{isX+itY}]\|\le(|s|+|t|)\,\|[Q,T]\|\,$ because
$$
\|[Q,e^{isX}]\|=\|[e^{-isX}Qe^{isX}-Q\|=\Bigl\|\int_0^se^{-itX}[Q,X]e^{itX}\dr t\Bigr\|\le|s|\,\|[Q,X]\|
$$
and, similarly, $\|[Q,e^{itY}]\|\le|t|\,\|[Q,Y]\|$. It follows that  
\begin{equation}
\label{medium}
\|R_1\|\
\le\ \frac{\|[Q,T]\|}{4\pi^{2}}\,\max_j \|S_j \chi_j(T)\|\int_{\R^2}(|s|+|t|)\,|\hat\chi(s,t)|\,\dr s\,\dr t.
\end{equation}
Similar arguments show that $\|R_3\|$ admits the same estimate. It remains to notice that $\|R_2\|\le \max_j \|[Q,S_j]\|$ because $\chi_i\chi_j\equiv0$ for  $i\neq j$.
\end{proof}

\begin{lemma}\label{diagonalize}
Let $\rho\in C_0^{\infty}(\R)$ be a nonnegative function such that  $\,\supp \rho\subset [-1,1]\,$ and $\,\sum_{n\in \Z}\rho_n^2(x)=1$, where $\rho_n(x)=\rho(x-n)$. If $X,Y\in\B(H)$ are self-adjoint  and $\,Y'=\sum_{n\in \Z} \rho_n(X)Y \rho_n(X)\,$ then
\begin{enumerate}
\item[\rm(d$_1$)]
$\|EY'E\|\le \|EYE\|$ for any spectral projection $E$ of the operator $X$;
\item[\rm(d$_2$)]
$\|[X,Y']\|\le \|[X,Y]\|\,$;
\item[\rm(d$_3$)]
$\|Y-Y'\|\le C_\rho\|[X,Y]\|$ where $C_\rho$ is a constant depending only on $\rho$.
\end{enumerate}
\end{lemma}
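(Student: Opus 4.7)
My plan is as follows.

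Parts (d$_1$) and (d$_2$) look routine. Any spectral projection $E$ of $X$ commutes with each $\rho_n(X)$, so I would write
$EY'E=\sum_n\rho_n(X)(EYE)\rho_n(X)$ and then for $v,w\in H$ estimate
$|\<EY'Ev,w\>|\le\|EYE\|\sum_n\|\rho_n(X)v\|\,\|\rho_n(X)w\|\le\|EYE\|\,\|v\|\,\|w\|$
by Cauchy--Schwarz, using that $\sum_n\|\rho_n(X)u\|^2=\|u\|^2$ (which follows from $\sum_n\rho_n^2\equiv 1$ and functional calculus). For (d$_2$), since $X$ commutes with each $\rho_n(X)$, $[X,Y']=\sum_n\rho_n(X)[X,Y]\rho_n(X)$ and the same Cauchy--Schwarz argument applies.

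The real content is (d$_3$). I start from $Y-Y'=\sum_n(\rho_n^2(X)Y-\rho_n(X)Y\rho_n(X))=\sum_n\rho_n(X)[\rho_n(X),Y]$. Since $\sum_n\rho_n^2(X)=I$, we have $\sum_n[\rho_n^2(X),Y]=0$, and comparing this with the analogous identity $Y-Y'=\sum_n[Y,\rho_n(X)]\rho_n(X)$ yields the symmetric form
\[
Y-Y'\ =\ \tfrac12\sum_n[\rho_n(X),[\rho_n(X),Y]].
\]
This is the key manipulation: the direct sum $\sum_n\rho_n(X)[\rho_n(X),Y]$ has no obvious cancellation, but the symmetrized double commutator does.

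To exploit it, I would pass to Fourier variables. Writing $\rho_n(X)=\tfrac{1}{2\pi}\int\hat\rho(s)\,e^{-isn}\,e^{isX}\,\dr s$, the sum over $n$ can be evaluated by Poisson summation $\sum_n e^{-in(s+s')}=2\pi\sum_k\delta(s+s'-2\pi k)$, which gives
\[
Y-Y'\ =\ \frac{1}{4\pi}\sum_k\int\hat\rho(s)\,\hat\rho(2\pi k-s)\,\bigl[e^{i(2\pi k-s)X},[e^{isX},Y]\bigr]\,\dr s.
\]
The Duhamel bound $\|[e^{i\alpha X},Z]\|\le|\alpha|\,\|[X,Z]\|$ combined with the Jacobi identity $[X,[e^{i\beta X},Y]]=[e^{i\beta X},[X,Y]]$ gives the pointwise estimate $\|[e^{i\alpha X},[e^{i\beta X},Y]]\|\le 2\min(|\alpha|,|\beta|)\|[X,Y]\|$.

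Plugging in and pulling out $\|[X,Y]\|$, the question reduces to showing
\[
\sum_k\int|\hat\rho(s)\,\hat\rho(2\pi k-s)|\,\min(|s|,|2\pi k-s|)\,\dr s\ <\ \infty,
\]
with a bound depending only on $\rho$. Since $\hat\rho$ is Schwartz, splitting the $s$-integral into $|s|\le\pi|k|$ and $|s|>\pi|k|$ and using rapid decay of $\hat\rho$ yields an $O(|k|^{1-N})$ bound for each fixed $N$, making the $k$-sum convergent; this provides $C_\rho$. The main obstacle is exactly this last step: turning the non-trivial cancellation in the partition-of-unity sum into a bound that is independent of $\|X\|$, which is precisely what Poisson summation and Schwartz decay of $\hat\rho$ deliver.
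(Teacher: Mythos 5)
Your proof is correct, but part (d$_3$) takes a genuinely different route from the paper's. For (d$_1$) and (d$_2$) the two arguments are interchangeable: the paper bounds the pinching map $Z\mapsto\sum_n\rho_n(X)Z\rho_n(X)$ via operator inequalities (if $aI\le Z\le bI$ then the image satisfies the same bounds, applied to $EYE$ and to the skew-adjoint $[X,Y]$), while you use Cauchy--Schwarz together with $\sum_n\|\rho_n(X)u\|^2=\|u\|^2$. For (d$_3$) the paper stops at the unsymmetrized identity $Y-Y'=\sum_n[Y,\rho_n(X)]\rho_n(X)$, writes $[Y,\rho_n(X)]=\tfrac{1}{2\pi}\int\hat\rho(s)\,e^{-isn}[Y,e^{isX}]\,\dr s$, pulls the $n$-independent factor $[Y,e^{isX}]$ (of norm at most $|s|\,\|[X,Y]\|$) out of the $n$-sum, and bounds $\bigl\|\sum_n e^{-isn}\rho_n(X)\bigr\|\le 2$ by splitting into even and odd $n$: since $\supp\rho\subset[-1,1]$, the operators $\rho_n(X)$ within each parity class act in mutually orthogonal subspaces. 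This is exactly the computation leading to \eqref{medium} in Lemma \ref{manysupp}, and it shows that no cancellation beyond finite overlap of the supports is needed; your symmetrization into $\tfrac12\sum_n[\rho_n(X),[\rho_n(X),Y]]$ is therefore not necessary. That said, your double-commutator identity and the Poisson-summation computation are sound (the $n$-sum is effectively finite because $X$ is bounded, so the distributional step is legitimate, and the bound $\|[e^{i\alpha X},[e^{i\beta X},Y]]\|\le 2\min(|\alpha|,|\beta|)\,\|[X,Y]\|$ plus Schwartz decay of $\hat\rho$ makes the $k$-sum converge). What your version buys is that it never uses the support condition on $\rho$ --- only that $\rho$ is Schwartz with $\sum_n\rho_n^2\equiv1$ --- at the price of noticeably more machinery; the paper's argument is shorter because the finite-overlap structure does all the work.
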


\begin{proof}
If $aI\le EYE\le bI$ then $a\|u\|^2\le(EY'Eu,u)\le b\|u\|^2$ for all $u\in H$. This implies (d$_1$). Since the commutator $[X,Y]$ is skew-adjoint, the inequality (d$_2$) follows from the identity 
$\,[X,Y']=\sum_{n\in \Z} \rho_n(X)[X,Y]\rho_n(X)$
by similar arguments. Finally, since
$Y-Y'=\sum_n [Y,\rho_n(X)]\rho_n(X)$, the estimate (d$_3$) is proved in the same way as \eqref{medium}.
\end{proof}

\section{An extension theorem}
\label{s:ext}

The following theorem is a refined version of \cite[Corollary 4.5]{D}. The latter provides normal operators $N\in\B(H)$ and $T\in\B(H\oplus H)$ satisfying (e$_1$) with $C=50\sqrt2$ and (e$_2$). If $\A=\B(H)$ and $\dim H<\infty$ then (e$_3$) follows from (e$_1$), so that  \cite[Corollary 4.5]{D} is sufficient to prove \eqref{main-matrices}.

\begin{theorem}\label{davidson}
Let $\A$ be a unital $C^*$-algebra, and let $A\in\A$. If $\,\re A\,$ can be approximated by self-adjoint operators from $\A$ with finite spectra then there exist normal operators $N\in\A$ and $T\in M_2(\A)$ such that 
\begin{enumerate}
\item[\rm(e$_1$)]
$\|A\oplus N-T\|\le C\,\|[A,A^*]\|^{1/2}$,
\item[\rm(e$_2$)]
$\|N\|\le\|A\|$ and $\|T\|\le\|A\|$,
\item[\rm(e$_3$)]
$d_2(T)\le d_1(A)+C\,\|[A,A^*]\|^{1/2}$,
\end{enumerate}
where $C$ is a constant independent of $\A$ and $A$.
\end{theorem}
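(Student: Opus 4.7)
The plan is to carry out the construction of \cite[Corollary 4.5]{D} inside $\A$ (using the hypothesis on $\re A$ in place of a finite-dimensional setting) and to choose the second block $N$ specifically so that the new estimate (e$_3$) comes essentially for free. After normalising so that $\|A\|\le 1$, set $\eps := \|[A,A^*]\|^{1/2}$ and write $A = X + iY$ with $X = \re A$, $Y = \im A$ self-adjoint, so $\|[X,Y]\| = \eps^2/2$. By hypothesis, one picks a self-adjoint $X_0 \in \A$ with finite spectrum and $\|X-X_0\| \le \eps$, and (after a functional-calculus rounding) one may also arrange that the eigenvalues of $X_0 = \sum_j x_j P_j$ are spaced by at least a constant multiple of $\eps$.

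The next step is to produce a normal $A' \in \A$ with finite spectrum and $\|A - A'\| \le C\eps$. Applying Lemma \ref{diagonalize} (suitably rescaled on the scale $\eps$) to $Y$ yields $Y' \in \A$ with $\|Y-Y'\| \le C\eps$ and a small commutator $\|[X_0, Y']\|$; a subsequent averaging argument, using both the smallness of $\|[X_0, Y']\|$ and the spectral gap of $X_0$, yields $Y_0 \in \A$ with $[X_0, Y_0] = 0$ and $\|Y - Y_0\| \le C'\eps$. Setting
$$
A' := X_0 + iY_0, \qquad N := (A')^{*} = X_0 - iY_0, \qquad T := A' \oplus N \in \M2(\A),
$$
all three are normal with finite spectra, and $\|A \oplus N - T\| = \|A - A'\| \le C\eps$ gives (e$_1$). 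For (e$_2$) one precomposes, if necessary, with a $C^2$ cutoff of the closed disc $\{|z| \le \|A\|\}$ and uses the operator Lipschitz estimate from Section \ref{s:lip}, at cost of an extra $O(\eps)$.

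The new ingredient is (e$_3$). Since $T = A' \oplus N$ is block diagonal in $\M2(\A)$, the minimisation over $\GL_0(\A\oplus\A)$ decouples into two independent problems, so
$$
d_2(T, \la) \;=\; \max\bigl(d_1(A', \la),\, d_1(N, \la)\bigr) \qquad \text{for every } \la \in \C,
$$
where we write $d_1(B,\la) := \dist(B - \la I, \GL_0(\A))$. The key observation is that $N - \la I = (A' - \bar\la I)^{*}$ and that the involution on $\A$ is an isometric bijection which preserves $\GL_0(\A)$ (it is continuous, fixes $I$, and maps $\GL(\A)$ onto itself); hence $d_1(N, \la) = d_1(A', \bar\la)$. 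Taking the supremum over $\la$ gives $d_2(T) \le d_1(A') \le d_1(A) + \|A-A'\| \le d_1(A) + C\eps$, which is (e$_3$).

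The main obstacle is the construction of $Y_0$ commuting exactly with $X_0$ while simultaneously keeping $\|Y - Y_0\| = O(\eps)$: this requires a careful balance between the spacing of the $x_j$ (which must be small enough that $\|X-X_0\| = O(\eps)$, yet large enough that the off-diagonal entries $P_i Y' P_j$ discarded by the averaging contribute only $O(\eps)$) and the commutator bound on $\|[X_0, Y']\|$. This balancing is essentially the substance of Davidson's construction, which has to be reproduced inside $\A$ using only the approximation of $\re A$ by finite-spectrum self-adjoints. The derivation of the genuinely new statement (e$_3$) from the identification $N = (A')^{*}$ is then a short argument exploiting only that the involution preserves $\GL_0(\A)$.
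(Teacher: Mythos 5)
Your central construction cannot work, and the failure is not a technicality. You propose to produce a \emph{normal} $A'\in\A$ with finite spectrum satisfying $\|A-A'\|\le C\,\|[A,A^*]\|^{1/2}$, and then take $T=A'\oplus (A')^*$. But the existence of such an $A'$ is (up to the finite-spectrum refinement) exactly the statement $\dist(A,\NN_f(\A))\le C\|[A,A^*]\|^{1/2}$ \emph{without} the $d_1(A)$ term --- i.e.\ it begs the main theorem, and moreover it is false in general: by the lower bound \eqref{main-estimate2} one has $\dist(A,\NN_f(\A))\ge d_1(A)$, and there are operators with $\|[A,A^*]\|$ arbitrarily small and $d_1(A)$ bounded away from zero (e.g.\ a unilateral weighted shift on $\ell^2(\N)$ with weights $w_k=\min((k+1)\eps,1)$, which is Fredholm of index $-1$ with $\|[A,A^*]\|=O(\eps)$). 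Note also that a normal $A'$ with finite spectrum automatically has $d_1(A')=0$ by Remark \ref{gl-0}, so your $T$ would satisfy $d_2(T)=0$ --- another sign that the construction is too strong to exist. The whole point of the extension theorem is that one \emph{cannot} in general find a normal element of $\A$ near $A$; one can only realise $A$ (up to $O(\|[A,A^*]\|^{1/2})$) as the corner $PTP$ of a normal $T\in\M2(\A)$ which is genuinely \emph{not} block diagonal. In the paper this is done with the projections $\Pi_n\in\M2(\A)$, whose off-diagonal entries $\psi_n(X)$ are what absorb the index obstruction; the resulting $T$ has $\|[P,T]\|>0$, and controlling $d_2(T)$ requires the separate observation that $d_1(N)=0$ because $\sigma(N)\subset\Z+i\R$.

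The intermediate averaging step is also quantitatively unsound even ignoring the index obstruction. If the eigenvalues of $X_0$ are spaced by $c\,\eps$ (which is forced if $\|X-X_0\|\le\eps$ is to hold for general $X$), then the best available bound is $\|[X_0,Y']\|\le\|[X,Y']\|+2\|X-X_0\|\,\|Y'\|=O(\eps)$, not $O(\eps^2)$; the discarded off-diagonal blocks satisfy only $\|P_iY'P_j\|\le\|[X_0,Y']\|/|x_i-x_j|=O(1)$, so $\|Y'-Y_0\|$ is not small. This is precisely why Davidson's construction (and the paper's version of it) works on the \emph{unit} scale after the normalisation $\|[A,A^*]\|=1$ and keeps the two components of $H\oplus H$ coupled through the $\Pi_n$, rather than producing a commuting pair inside $\A$. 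Your observation about the involution preserving $\GL_0(\A)$ is correct but is applied to an object that does not exist.
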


\begin{proof}
Let $A=X+iY$, where $X,Y$ are self-adjoint.
First of all, let us make some reductions. Note that the statements (e$_1$)--(e$_3$) are invariant under multiplication of $A$ by a scalar. Therefore, without loss of generality, we shall be assuming that $\|[A,A^*]\|=1$. Since $X$ is approximated by operators with finite spectra, we can also assume that $X$ has finite spectrum. 
Finally, (e$_2$) can be replaced with the weaker condition
\begin{enumerate} 
\item[\rm(e$'_2$)]
$\|N\|\le\|A\|+1$.
\end{enumerate}
Indeed, since $\|[A,A^*]\|=1$, from the estimates (e$_1$) and (e$'_2$) it follows that 
\begin{enumerate} 
\item[\rm(e$''_2$)]
$\|T\|\le \|A\|+C+1$.
\end{enumerate}
If $\,c=\frac{\|A\|}{\|A\|+C+1}\,$ then, in view of (e$_1$), (e$'_2$) and (e$''_2$),
\begin{multline*}
\|A\oplus cN-cT\|\le\|A\oplus N-T\|+(1-c)\left(\|N\|+\|T\|\right)\\
\le C+2(1-c)\,(\|A\|+C+1)=3C+2\,,
\end{multline*}
$\|cN\|\le\|A\|$, $\|cT\|\le\|A\|$ and
$\,d_2(cT)=c\,d_2(T)\le d_2(T)\,$.
Thus we see that (e$_1$)--(e$_3$) hold with the constant $C$ replaced by $3C+2$ for the normal operators $cN$ and $cT$.

Let us fix a function $\rho$ satisfying the conditions of Lemma \ref{diagonalize}, and denote by $E_n$ the spectral projections of $X$ corresponding to the intervals $[n;n+1)$. Since $\sigma(X)$ is finite, the projections $E_n$ belong to $\A$. Consider the operators 
$$
\Pi_n\ :=\ 
\begin{pmatrix}
\rho_n^2(X) & \psi_n(X)\\
\psi_n(X) & E_{n-1}+E_n-\rho_n^2(X)
\end{pmatrix}
\ \in\ M_2(\A)\,,
$$
where $\rho_n(x)=\rho(x-n)$ and
$\psi_n(x)=(-1)^n\rho_n(x)(\rho_{n-1}(x)+\rho_{n+1}(x))$. One can easily see that $\Pi_n$ are mutually orthogonal projections such that 
\begin{equation}\label{dav3}
\sum_{n\in \Z}\Pi_n=\begin{pmatrix}I & 0\\ 0 & I\end{pmatrix}\quad\text{and}\quad
\Pi_n=\begin{pmatrix}E_{n-1}+E_n & 0\\ 0 & E_{n-1}+E_n\end{pmatrix}\Pi_n
\end{equation}
Note that $[\Pi_n,E_m\oplus E_m]=0$ for all $n,m\in \Z$. It follows that 
$(E_{n}\oplus E_n)\Pi_m$ are mutually orthogonal projections such that 
\beq
\label{smallsplit}
\sum\limits_{n,m\in\Z}\begin{pmatrix}E_n & 0\\ 0 & E_n\end{pmatrix}\Pi_m=\sum\limits_{|n-m|\le 1}\begin{pmatrix}E_n & 0\\ 0 & E_n\end{pmatrix}\Pi_m=\begin{pmatrix}I & 0\\ 0 & I\end{pmatrix}.
\eeq

Let $Y'$ be the operator defined in Lemma \ref{diagonalize}, and let $Y''=\sum_{m\in \Z} E_m Y' E_m$.
We claim that the normal operators 
$$
N:=\sum_{n\in \Z}E_n\left(nI +i Y'\right)E_n
\quad\text{and}\quad
T:=\sum_{n\in \Z}\Pi_n\begin{pmatrix}nI+i Y' & 0\\ 0 & nI+iY''\end{pmatrix}\Pi_n
$$
satisfy the conditions (e$_1$), (e$'_2$) and (e$_3$). 

First, let us prove (e$_1$). Since $E_n (X-nI) E_n\le 1$, using \eqref{smallsplit}, we obtain
\begin{multline}
\label{estimate1}
\|\re (A\oplus N)-\re T\|\ \le\ 1+\Bigl\|\sum\limits_{n}n (E_n\oplus E_n-\Pi_n)\Bigr\|\\
=\ 1+\Bigl\|\sum\limits_{n,m}\left(n(E_n\oplus E_n)\Pi_m-n(E_m\oplus E_m)\Pi_n\right)\Bigr\|
\ \\=\
1+\Bigl\|\sum\limits_{|n-m|\le 1}(n-m)(E_n\oplus E_n)\Pi_m\Bigr\|\ \le\ 2.
\end{multline}
For the imaginary part, in view of the estimate (d$_3$) in Lemma \ref{diagonalize}, we have 
\beq
\label{estimate2}
\|\im(A\oplus N)-Y'\oplus Y''\|\ =\ \|Y\oplus Y''-Y'\oplus Y''\|\ \le\ C_\rho\,,
\eeq
where $C_\rho$ is a constant depending only on the choice of $\rho$. Note that $E_n Y'E_m=0$ whenever $|n-m|\ge 2$.
These identities and the second equality \eqref{dav3} imply that $\,\im T=\sum_{|n-m|\le 2}\Pi_n (Y'\oplus Y'')\Pi_m\,$ and, consequently,
\begin{multline}
\label{estimate3}
\l\|Y'\oplus Y''-\im T\r\|\ =\ \Bigl\|\sum\limits_{1\le |n-m|\le 2}\Pi_n (Y'\oplus Y'')\Pi_m\Bigr\|\\ 
\le\ 4\max_{1\le |n-m|\le 2}\l\|\Pi_n (Y'\oplus Y'')\Pi_m\r\|\ \le\ 4\max_n \|[\Pi_n,Y'\oplus Y'']\|.
\end{multline}
Since $[E_n,Y'']=[E_{n-1},Y'']=0$, we have
\beq
\label{matrix1}
[\Pi_n,Y'\oplus Y'']=
\l(\begin{matrix}
[\rho^2_n(X),Y']& \psi_n(X)Y''-Y'\psi_n(X)\\
\psi_n(X)Y' - Y''\psi_n(X) & [Y'',\rho^2_n(X)]
\end{matrix}\r).
\eeq
The operator \eqref{matrix1} is skew-adjoint and
\begin{multline}\label{off-diag}
\psi_n(X)Y''-Y' \psi_n(X)\\
=(-1)^n\l([\rho_n(X)\rho_{n+1}(X),Y'] E_n+[\rho_n(X)\rho_{n-1}(X),Y'] E_{n-1}\r).
\end{multline}

By Lemma \ref{diagonalize}, $\|[X,Y']\|\le\|[X,Y]\|=\frac12\,\|[A,A^*]\|=1/2$ and, similarly, 
$$
\|[X,Y'']\|\ =\ \Bigl\|\sum_{m\in\Z}E_m[X,Y']E_m\Bigr\|\ \le\ \|[X,Y']\|\ \le\ 1/2\,.
$$
Since $C^2$-norms of $\rho_n^2$ and $\rho_n\rho_{n+1}$ are estimated by a constant independent of $n$, 
the inequality \eqref{comm-l} and \eqref{off-diag} imply that the norms of all entries in the right hand side of \eqref{matrix1} are estimated by constants depending only on $\rho$. Together with \eqref{estimate1}, \eqref{estimate2} and \eqref{estimate3}, this yields (e$_1$).

Obviously, $\|E_n (X-nI) E_n\|\le 1$. The estimate (d$_1$) in Lemma \ref{diagonalize} implies that $\,\|E_n(nI+iY')E_n\|\le\|E_n(nI+iY)E_n\|\,$. It follows that 
$$
\|N\|=\max_n\|E_n(nI+iY')E_n\|\le\max_n\|E_n(nI+iY)E_n\|\le \|A\|+1.
$$
Finally, $\sigma(N)$ is a bounded subset of $\Z+i\R$. By Remark \ref{gl-0}, $d_1(N)=0$. This equality and the estimate (e$_1$) imply (e$_3$).
\end{proof}

\section{Two auxiliary theorems}
\label{s:aux}

Recall that any invertible operator $T$ has the polar decomposition $T=V|T|$, where $|T|=\sqrt{T^*T}$ and $V=T|T|^{-1}$ is a unitary operator from the same $C^*$-algebra as $T$. A normal operator $T$ also admits the polar decomposition $T=V|T|$ with a unitary $V$. However, if $T$ is not invertible then, generally speaking, the unitary polar part $V$ of $T$ is not uniquely defined and may not belong to the same $C^*$-algebra. In the both cases, the unitary operator $V$ satisfies $V\rho(|T|)=\rho(|T^*|)V$ for all $\rho\in C(\R^1)$. If $T$ is normal, this implies that $V$ commutes with all continuous functions of $T$.

In the next two theorems $V$ is a unitary polar part of $T$ and $\Pi_{r}$ is the spectral 
projection of $T$ onto the disc $\O_r\,$.

\begin{theorem}
\label{t:u1}
There exist constants $\delta>0$ and $C>0$ such that 
for any normal $T\in \M2(\A)$ with 
$d_2(T,0)<\delta$ one can find a unitary operator $U\in \M2(\A)$ satisfying the following conditions.
\begin{enumerate}
\item[{\rm(u$_1$)}] $\diag_P U \in \GL_0(\A\oplus \A)$,
\item[{\rm(u$_2$)}] $[U,\Pi_{r}]=0$ for $r\ge 1$,
\item[{\rm(u$_3$)}] $U(I-\Pi_{1})=V(I-\Pi_{1})$,
\item[{\rm(u$_4$)}] $\|[P,U]\|\le C d_2(T,0)$.
\end{enumerate}
\end{theorem}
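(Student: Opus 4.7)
The plan is to build $U$ as the unitary polar part of an explicit invertible $S\in\M2(\A)$ that is block-diagonal with respect to $\Ran\Pi_1\oplus\Ran(I-\Pi_1)$ and coincides with $V$ on the latter subspace. First, using the hypothesis $d_2(T,0)<\delta$, choose a block-diagonal invertible $W=W_1\oplus W_2\in\GL_0(\A\oplus\A)$ with $\|T-W\|<2\delta$ and form its unitary polar part $\mathcal{V}_W:=W|W|^{-1}\in\GL_0(\A\oplus\A)$; this $\mathcal{V}_W$ is a block-diagonal unitary, so in particular $[P,\mathcal{V}_W]=0$. Fix a smooth cutoff $\phi:\R_+\to[0,1]$ with $\phi\equiv 0$ on $[0,1/2]$ and $\phi\equiv 1$ on $[1,\infty)$, and set $g(t):=\phi(t)/t$ (continuous on $\R_+$ with $g(0):=0$). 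The candidate is $U:=S(S^*S)^{-1/2}$, where
\[
S\;:=\;T\,g(|T|)\;+\;(1-\phi(|T|))\,\mathcal{V}_W\,(1-\phi(|T|))\;\in\;\M2(\A).
\]
A direct check shows $S$ preserves both $\Ran\Pi_1$ and $\Ran(I-\Pi_1)$: on the latter the sandwich term vanishes and $Tg(|T|)=V$; on the former both summands return to $\Ran\Pi_1$ because the outer $(1-\phi(|T|))$ kills any component outside $\Ran\Pi_1$.

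The principal technical step is the uniform lower bound $S^*S\ge c\,I$ for some universal $c>0$ and all sufficiently small $\delta$. On $\Ran(I-\Pi_1)$ this is automatic since $S$ restricts to the unitary $V$. On $\Ran\Pi_1$ one expands $S^*S$ and uses the algebraic identity $g(t)\,t=\phi(t)$ together with the normal-operator identity $T^*V=|T|$ on $\Ran(I-\Pi_{1/2})$ to reduce the principal terms to $(\phi(|T|)+(1-\phi(|T|)))^2=I$; the remainder is controlled via operator-Lipschitz commutator bounds (the $C^2\hookrightarrow\OL$ embedding of Section~\ref{s:lip} and Lemma~\ref{smoothmod}) and via the structural comparison $\|\phi(|T|)(V-\mathcal{V}_W)\|=O(\delta)$, which follows from $\|T-W\|<2\delta$ on the spectral subspace where both $|T|$ and $|W|$ are bounded below. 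This yields $S^*S=I+O(\delta)$, so for $\delta$ small the operator $U$ is a well-defined unitary inheriting the block-diagonal structure of $S$.

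Properties (u$_2$) and (u$_3$) then follow immediately from the block form of $U$ with respect to $\Ran\Pi_1\oplus\Ran(I-\Pi_1)$: $U$ commutes with every $\Pi_r$ for $r\ge 1$ and equals $V$ on $\Ran(I-\Pi_1)$. For (u$_4$), the operator-Lipschitz machinery applied to $Tg(|T|)$ and $1-\phi(|T|)$, together with $[P,\mathcal{V}_W]=0$ and $\|[P,T]\|\le d_2(T,0)$, gives $\|[P,S]\|\le C\,d_2(T,0)$; applying the same framework to $x\mapsto x^{-1/2}$ on the spectrum of $S^*S\subset[c,\infty)$ then lifts this to $\|[P,U]\|\le C\,d_2(T,0)$. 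For (u$_1$), the $O(\delta)$ estimates furnish $\|\diag_P U-\mathcal{V}_W\|<1$ for $\delta$ small, so openness of $\GL_0(\A\oplus\A)$ and the straight-line homotopy to $\mathcal{V}_W\in\GL_0(\A\oplus\A)$ place $\diag_P U$ in $\GL_0(\A\oplus\A)$. The main obstacle is the uniform bound $S^*S\ge c\,I$: this is where the hypothesis $d_2(T,0)<\delta$ enters essentially, through the closeness of $V$ to $\mathcal{V}_W$ on the transition region $|T|\in[1/2,1]$, and it also determines the constant $\delta$ of the theorem.
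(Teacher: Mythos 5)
Your construction is essentially the paper's: the paper also approximates $T$ by a block--diagonal invertible $T_0\in\GL_0(\A\oplus\A)$, takes its unitary polar part $V_0$ (your $\mathcal V_W$), glues it to $V$ by smooth cut-offs supported near $0$, and takes the unitary polar part of the resulting $S$; the estimates (u$_1$)--(u$_4$) are then obtained exactly as you indicate, via Lemma~\ref{smoothmod}, the operator Lipschitz bound \eqref{comm-l}, Lemma~\ref{l:radial} and Lemma~\ref{gl0}.

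There is, however, one concrete slip in your ``principal technical step''. With your weights the modulus of $S$ does \emph{not} reduce to $\bigl(\phi(|T|)+(1-\phi(|T|))\bigr)^2=I$: carrying out the expansion (replacing $\mathcal V_W$ by $V$ on the support of $\phi$ and commuting $(1-\phi)$ through $\mathcal V_W$ via $|W|\leftrightarrow|W^*|$) gives
$S^*S=\bigl(\phi(|T|)+(1-\phi(|T|))^2\bigr)^2+O(\delta)$, and $\phi+(1-\phi)^2=1-\phi(1-\phi)$ is not $1$ on the transition region $|T|\in[\tfrac12,1]$. So the claim $S^*S=I+O(\delta)$ is false; what is true is $S^*S=D^2+O(\delta)$ with $D=\phi(|T|)+(1-\phi(|T|))^2\ge\tfrac34 I$. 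This still yields the uniform lower bound you need, and since $|S|^{-1}=D^{-1}+O(\delta)$ one still gets $U=S|S|^{-1}=\mathcal V_W D\,D^{-1}+O(\delta)=\mathcal V_W+O(\delta)$, so (u$_1$) and (u$_4$) survive --- but only after tracking $D$ through the polar decomposition, which your outline skips. The paper avoids this entirely by normalizing the weights, taking $S=\rho_1(T^*T)V_0\rho_1(T^*T)+V\rho_2^2(T^*T)$ with $\rho_1^2+\rho_2^2\equiv1$, so that $S$ really is $O(\delta)$-close to the unitary $V_0$; I recommend you adopt that normalization (or explicitly carry the factor $D$). A second, smaller point: your comparison $\|\phi(|T|)(V-\mathcal V_W)\|=O(\delta)$ should be justified as in Lemma~\ref{smoothmod}, by writing $V\phi(|T|)=T\,g(|T|)$ and $\mathcal V_W\phi(|W|)=W\,g(|W|)$ with $g(t)=\phi(t)/t$ smooth; a direct comparison of polar parts is not available where $|T|$ is small, and one must also keep the cut-off on the correct side of $\mathcal V_W$, since $\mathcal V_W$ intertwines $|W|$ with $|W^*|$ rather than commuting with functions of $|W|$.
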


\begin{theorem}
\label{t:u2}
There exist constants $\delta>0$ and $C>0$ such that the following is true. Let $\A$ have real rank zero, and let $T\in \M2(\A)$ be a normal operator such that $\sigma(T)\cap \O_3$ is a subset of the straight line 
$e^{i\theta}\R$, where $\theta\in[0,\pi)$. If $\|[P,T]\|<\delta$ and $\,\diag_P(T\pm ie^{i\theta} I)\in\GL_0(\A\oplus\A)$ then
one can find a unitary operator $U\in \M2(\A)$ satisfying {\rm(u$_1$)} and the following conditions.
\begin{enumerate}
\item[{\rm(u$'_2$)}] $[U,\Pi_{r}]=0$ for $1\le r\le 2$,
\item[{\rm(u$'_3$)}] $U(\Pi_2-\Pi_{1})=V(\Pi_2-\Pi_{1})$,
\item[{\rm(u$'_4$)}] $\|[P,U]\|\le C \|[P,T]\|$,
\item[{\rm(u$'_5$)}] 
the spectrum of $\left.U\right|_{\Ran\Pi_2}$ is contained in the intersection of 
$e^{i\theta}\R$ with the unit circle.
\end{enumerate}
\end{theorem}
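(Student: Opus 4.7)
The plan is to reduce to $\theta=0$ and construct $U$ blockwise with respect to $\Pi_1,\Pi_2$. Replacing $T$ by $e^{-i\theta}T$ leaves the commutator $[P,T]$, the modulus $|T|$, the spectral projections $\Pi_r$, and the component $\GL_0(\A\oplus\A)$ invariant (since $e^{-i\theta}I\in\GL_0$), so one may assume $\sigma(T)\cap\O_3\subset\R$ and $\diag_P(T\pm iI)\in\GL_0(\A\oplus\A)$.

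On $\Ran(I-\Pi_1)$, where $|T|\ge1$ on the spectrum, the map $z\mapsto z/|z|$ is continuous, so the polar part $V$ is well-defined in $\M2(\A)$ by continuous functional calculus; set $U=V$ there. Since $\sigma(T)\cap\O_2\subset\R$, this $V$ already takes values in $\{+1,-1\}$ on $\Ran(\Pi_2-\Pi_1)$, giving (u$'_3$) and half of (u$'_5$). On $\Ran\Pi_1$ set $U=\Pi_1-2E$ for a projection $E\in\M2(\A)$ with $\Ran E\subset\Ran\Pi_1$, to be constructed using real rank zero; this is a self-adjoint unitary of spectrum $\{\pm1\}$, which completes (u$'_2$) and (u$'_5$).

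The projection $E$ plays the role of the ``negative half'' of $T|_{\Ran\Pi_1}$. Start from $S:=T\chi(|T|^2)\in\M2(\A)$, where $\chi\in C_0^\infty(\R)$ satisfies $\chi=1$ on $[0,1]$ and $\supp\chi\subset[0,4)$; self-adjointness follows from $\sigma(T)\cap\O_2\subset\R$, and Lemma \ref{smoothmod} yields $\|[P,S]\|\le C\|[P,T]\|$. Working in the hereditary subalgebra supported on $\Ran\Pi_1$, which inherits real rank zero, approximate $S\Pi_1$ in norm by a self-adjoint element with finite spectrum bounded away from $0$, and let $E$ be its negative spectral projection; this is automatically a projection with $E=\Pi_1 E\Pi_1$. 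Since $\sign$ is smooth on a finite set bounded away from $0$, its operator Lipschitz norm is controlled, and combining this with the norm closeness of the approximant gives $\|[P,E]\|\le C\|[P,T]\|$. A parallel operator Lipschitz estimate for $V|_{\Ran(I-\Pi_1)}$, regarded as $f(T)$ for a $C^2$ function $f$ supported on $\{|z|\ge1\}$ that agrees with $z/|z|$ there, yields $\|[P,V(I-\Pi_1)]\|\le C\|[P,T]\|$, and together these give (u$'_4$).

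The main obstacle is (u$_1$): showing $\diag_P U\in\GL_0(\A\oplus\A)$. This is where the two invertibility hypotheses are indispensable. The strategy is to produce a norm-continuous path in $\GL(\A\oplus\A)$ from $\diag_P U$ to the identity by first deforming $\diag_P U$ to the block-diagonal Cayley-type unitary built from $\diag_P(T-iI)(T+iI)^{-1}$, via a homotopy supported on the finite-spectrum approximant that continuously rotates the spectral sign around $0$ through either the upper or lower half-plane, and then contracting the Cayley factor to the identity using $\diag_P(T\pm iI)\in\GL_0$. The two hypotheses with $\pm i$ correspond precisely to the two possible routings of the sign around $0$; having both available removes the topological obstruction to $\diag_P U\in\GL_0$.
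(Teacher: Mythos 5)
There is a genuine gap, and it sits at the heart of your construction of the projection $E$. Real rank zero lets you approximate the self-adjoint element $S\Pi_1$ in norm by a finite-spectrum self-adjoint element, but it gives \emph{no quantitative control on the spectral gap at $0$} of that approximant. If $0$ lies in the spectrum of $T\big|_{\Ran\Pi_1}$ in an essential way (which is exactly the hard case), every finite-spectrum approximant within $\eps$ has spectrum meeting $[-\eps,\eps]$, so the sign function you apply has operator Lipschitz norm at least of order $\eps^{-1}$; the resulting bound on $\|[P,E]\|$ is therefore not $C\|[P,T]\|$ with a universal $C$. Worse, the negative spectral projection of such an approximant is essentially an arbitrary splitting of the near-kernel of $T$, which can fail to nearly commute with $P$ no matter how the approximant is chosen. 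This is precisely the obstruction the theorem is designed to overcome, and your recipe does not use the hypotheses $\diag_P(T\pm ie^{i\theta}I)\in\GL_0(\A\oplus\A)$ anywhere in the construction of $U$ itself — only in the (sketchy) homotopy argument for (u$_1$). In the paper those hypotheses are used \emph{constructively}: via Lemma \ref{contour_cut} (which rests on Lin's Lemma \ref{fsl2}) one produces a normal $T_0\in\GL_0(\A\oplus\A)$ close to $T$ whose spectrum avoids $0$; its unitary polar part $V_0$ is then \emph{exactly} block diagonal, so $[P,V_0]=0$, and the ``filling'' of the hole at $0$ by $\rho_1(T^*T)(\re V_0)\rho_1(T^*T)$ costs nothing in the commutator estimate. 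No sign function of a nearly degenerate self-adjoint element is ever taken. Your approach has no substitute for this mechanism.

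There are also secondary problems you would need to repair even if the above were fixed: $\Pi_1$ is a spectral projection onto an \emph{open} disc and need not belong to $\M2(\A)$, so the hereditary subalgebra $\Pi_1\M2(\A)\Pi_1$, the element $S\Pi_1$, and the blockwise definition $U=V(I-\Pi_1)+(\Pi_1-2E)$ are not a priori internal to $\M2(\A)$; likewise $V(I-\Pi_1)$ is $f(T)$ for a function discontinuous across $|z|=1$ unless the spectrum avoids that circle. The paper avoids this by gluing with the smooth cutoffs $\rho_1,\rho_2$ as in \eqref{sm} and only then taking the unitary part $S|S|^{-1}$. Finally, your treatment silently assumes the spectrum outside $\O_3$ is harmless, whereas the paper's Part~2 spends considerable effort (the maps $g_1,\dots,g_4$ plus Lemma \ref{gl0}) reducing an arbitrary $\sigma(T)$ to a curve of the form \eqref{Gamma} while propagating the conditions $\diag_P(\,\cdot\,\pm iI)\in\GL_0(\A\oplus\A)$; that reduction cannot be skipped.
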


The proofs of Theorems \ref{t:u1} and \ref{t:u2} use the following lemmas.

\begin{lemma}
\label{fsl2}
Suppose that $\A$ is a $C^*$-algebra of real rank zero, and let $U\in \GL_0(\A)$ be unitary. 
Then for any $\eps>0$ there exists a unitary operator $V_\eps\in \GL_0(\A)$ such that $-1\notin\sigma(V_\eps)$ and $\|U-V_\eps\|\le\eps$.
\end{lemma}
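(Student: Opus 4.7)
I aim to produce $V_\eps$ in the form $e^{iH}$, where $H=H^*\in\A$ has finite spectrum disjoint from $\pi+2\pi\Z$. Such a $V_\eps$ will automatically fulfill all required properties: $\sigma(V_\eps)=e^{i\sigma(H)}$ misses $-1$, and the continuous path $t\mapsto e^{itH}$ exhibits $V_\eps\in\GL_0(\A)$.

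The first step is to write $U$ itself as a product of exponentials close to $\one$. Since $U\in\GL_0(\A)$, I take a continuous path from $\one$ to $U$ in $\GL(\A)$ and pass to its pointwise polar decomposition, obtaining a continuous path $\gamma\colon[0,1]\to\GL_0(\A)$ through unitaries. Subdividing $[0,1]$ by uniform continuity so that $\|\gamma(t_k)\gamma(t_{k-1})^*-\one\|<1$ on each subinterval, each factor $W_k:=\gamma(t_k)\gamma(t_{k-1})^*$ has $\sigma(W_k)\subset\{z\in\T:|z-1|<1\}\subset\T\setminus\{-1\}$ and admits a principal logarithm $W_k=e^{iH_k}$ with $\|H_k\|<\pi/3$, so that $U=e^{iH_n}\cdots e^{iH_1}$. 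I then use real rank zero to replace each $H_k$ by a self-adjoint $H_k'\in\A$ with finite spectrum and $\|H_k-H_k'\|$ small; by shifting the finitely many eigenvalues of $H_k'$ slightly I can additionally arrange $\sigma(H_k')\cap(\pi+2\pi\Z)=\emptyset$. The operator Lipschitz bound $\|e^{iH_k}-e^{iH_k'}\|\le C\|H_k-H_k'\|$ (a consequence of the framework of Section~\ref{s:lip}) produces a product $e^{iH_n'}\cdots e^{iH_1'}$ close to $U$ and lying in $\GL_0(\A)$, with each individual factor having spectrum disjoint from $-1$.

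The main obstacle I anticipate is that avoiding $-1$ in the spectrum of each factor does not preclude $-1$ from the spectrum of the product. The cleanest resolution is to invoke the known consequence of real rank zero that unitaries of the form $\sum_{j=1}^{m}\lambda_{j}P_{j}$, with $\lambda_{j}\in\T$ and $P_{j}\in\A$ mutually orthogonal projections summing to $\one$, are dense among all unitaries in $\GL_0(\A)$. Applied directly to $U$, this gives a finite-spectrum approximation $V=\sum\lambda_j P_j$ with $\|U-V\|<\eps/2$; perturbing any $\lambda_j$ lying near $-1$ to some $\lambda_j'\in\T\setminus\{-1\}$ with $|\lambda_j-\lambda_j'|\le\eps/2$ then yields $V_\eps=\sum\lambda_j'P_j$, which is unitary, has finite spectrum avoiding $-1$, and equals $e^{i\sum_j\theta_jP_j}$ with $\theta_j=\arg(\lambda_j')\in(-\pi,\pi)$, hence belongs to $\GL_0(\A)$ via the path $t\mapsto e^{it\sum_j\theta_jP_j}$.
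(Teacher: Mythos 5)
The paper does not actually prove this lemma: it is quoted from Lin \cite{L1}, with \cite[Lemma 1.8]{FS} cited for a more elementary argument. Your final paragraph in effect does the same thing. The ``known consequence of real rank zero'' you invoke there --- density of the finite-spectrum unitaries $\sum_{j}\lambda_{j}P_{j}$ among the unitaries of $\GL_0(\A)$ --- is precisely Lin's weak (FU) theorem from \cite{L1}, i.e.\ it \emph{is} the substantive content of the lemma; the remaining step (nudging any $\lambda_j$ near $-1$ to $\lambda_j'\in\T\setminus\{-1\}$ and exhibiting the path $t\mapsto e^{it\sum_j\theta_jP_j}$) is trivial and you carry it out correctly. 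So, read as a citation-based proof, your last paragraph coincides with the paper's treatment. But be aware that this density statement is a theorem in its own right, not a formal consequence of the definition of real rank zero, so presenting it as routine background hides the entire difficulty.

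Your attempted self-contained argument has exactly the gap you flag, and it cannot be closed along those lines. The subdivision and polar-decomposition steps are fine and give $U=e^{iH_n}\cdots e^{iH_1}$ with $\|H_k\|<\pi/3$, and real rank zero plus the operator Lipschitz bound for $e^{it}$ yields $U\approx e^{iH_n'}\cdots e^{iH_1'}$ with each $H_k'$ of finite spectrum. However, a product of non-commuting finite-spectrum unitaries is just an arbitrary unitary in $\GL_0(\A)$: nothing constrains its spectrum, and in particular $-1$ may well belong to it, so no functional-calculus perturbation of the individual factors helps. Reducing the number of exponential factors to one (so that a single finite-spectrum approximation of the exponent settles the matter) is the exponential-rank problem that \cite{L1} solves; it requires genuinely new input --- or the more elementary but still nontrivial argument of \cite[Lemma 1.8]{FS} --- not merely uniform continuity and subdivision.
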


Lemma \ref{fsl2} is contained in \cite{L1}. See also \cite[Lemma 1.8]{FS} 
for a more elementary proof.

\begin{lemma}
\label{gl0}
Let $t\mapsto G_t$ be a continuous path in $\GL(\M2(\A))$ such that $\|G_t^{-1}\|<\|[P,G_t]\|^{-1}$ for all $t\in[0,1]$ and $\diag_P G_0\in \GL_0(\A\oplus \A)$. Then $\diag_P G_1\in \GL_0(\A\oplus \A)$.
\end{lemma}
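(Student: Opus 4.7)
The plan is to promote the original path to a continuous path of \emph{block diagonal} invertibles joining $\diag_P G_0$ to $\diag_P G_1$; since $\GL_0(\A\oplus\A)$ is by definition a union of connected components of the block diagonal invertibles, this immediately forces $\diag_P G_1\in\GL_0(\A\oplus\A)$.

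First I would use \eqref{intro1} to rewrite the hypothesis in a more analytic form:
$$
\|G_t-\diag_P G_t\|\ =\ \|[P,G_t]\|,
\qquad
\|G_t^{-1}(G_t-\diag_P G_t)\|\ <\ 1.
$$
The factorisation $\diag_P G_t=G_t\bigl(I-G_t^{-1}(G_t-\diag_P G_t)\bigr)$ and the convergent Neumann series for the bracketed factor then show that $\diag_P G_t\in\GL(\M2(\A))$ for every $t\in[0,1]$, with inverse depending continuously on $t$. Thus $t\mapsto\diag_P G_t$ is a continuous path in $\GL(\M2(\A))$.

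Second, because $\diag_P G_t$ commutes with $P$, it is of the form $A_t\oplus B_t$ with $A_t,B_t\in\A$, and invertibility in $\M2(\A)$ is equivalent to $A_t,B_t\in\GL(\A)$. The maps $t\mapsto A_t$ and $t\mapsto B_t$ are continuous paths in $\GL(\A)$, so each stays inside the connected component of its initial value. By hypothesis $\diag_P G_0\in\GL_0(\A\oplus\A)$, i.e.\ $A_0,B_0\in\GL_0(\A)$; hence $A_1,B_1\in\GL_0(\A)$ and $\diag_P G_1\in\GL_0(\A\oplus\A)$.

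I do not expect any serious obstacle here: the only point that needs checking is the invertibility of $\diag_P G_t$ along the whole path, and the norm estimate in the hypothesis is designed precisely to provide this via a standard perturbation argument. The rest is the observation that continuous paths of block diagonal invertibles preserve the connected component, which is built into the definition of $\GL_0(\A\oplus\A)$.
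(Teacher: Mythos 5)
Your proof is correct and follows essentially the same route as the paper's: the factorisation $\diag_P G_t=G_t\bigl(I-G_t^{-1}(G_t-\diag_P G_t)\bigr)$ together with $\|G_t-\diag_P G_t\|=\|[P,G_t]\|$ gives invertibility of $\diag_P G_t$ along the path, and then the connected-component argument finishes it. You have merely spelled out the blockwise reduction to $\GL_0(\A)$ that the paper leaves implicit.
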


\begin{proof}
Since $\diag_P G_t=G_t\left(I-G_t^{-1}(G_t-\diag_P G_t)\right)$ and $\|G_t-\diag_P G_t\|=\|[P,G_t]\|$,
the operators $\diag_P G_t$ are also invertible. Hence, the path $t\mapsto \diag_P G_t$ connects $G_0$ and $G_1$ in $\GL(\A)\oplus\GL(\A)$. As $G_0\in \GL_0(\A\oplus \A)$, so does $G_1$.
\end{proof}

\begin{lemma}
\label{l:radial}
If there exists a unitary operator $V$ such that $\|S-V\|\le\eps<1$ then the operator $S$ is invertible and $\|S-S|S|^{-1}\|<\frac{\eps\,(1+\eps)}{1-\eps}\,$.
\end{lemma}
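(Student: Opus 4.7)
My plan is to establish invertibility of $S$ via a Neumann series and then exploit the polar decomposition $S = W|S|$ with $W := S|S|^{-1}$ a bona fide unitary. Since $V$ is unitary, $\|V^*(S-V)\| = \|S-V\| \le \eps < 1$, so $I + V^*(S-V)$ is invertible, and therefore so is $S = V\bigl(I + V^*(S-V)\bigr)$. Invertibility of $S$ removes all ambiguity from the polar decomposition, so $W$ is a genuine unitary element of the same $C^*$-algebra as $S$. The crucial identity is
\[
S - S|S|^{-1} \;=\; W|S| - W \;=\; W(|S| - I),
\]
which, by unitarity of $W$, collapses the claim to the spectral estimate $\|S - S|S|^{-1}\| = \||S| - I\|$.

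Next I would localize $\sigma(|S|)$. For any unit vector $x$, the triangle inequality combined with $\|Vx\| = 1$ gives $1 - \eps \le \|Sx\| \le 1 + \eps$. Hence $\langle S^*Sx, x\rangle = \|Sx\|^2 \in [(1-\eps)^2, (1+\eps)^2]$ for every unit $x$, which by self-adjointness of $S^*S$ forces $\sigma(|S|) \subset [1-\eps, 1+\eps]$. The continuous functional calculus on the positive operator $|S|$ then yields $\||S| - I\| \le \eps$, and since $\eps < \eps(1+\eps)/(1-\eps)$ for $0 < \eps < 1$, the strict bound in the lemma follows. (The degenerate case $\eps = 0$ forces $S = V$, for which the statement is trivial.)

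The main obstacle here is essentially nonexistent: once the polar decomposition is invoked, both invertibility and the localization of $\sigma(|S|)$ are one-line consequences of $\|S-V\| \le \eps < 1$. Should one prefer the cruder splitting $\|S - S|S|^{-1}\| \le \|S\| \cdot \|I - |S|^{-1}\|$, the bounds $\|S\| \le 1 + \eps$ and $\|I - |S|^{-1}\| \le \eps/(1-\eps)$ (the latter from the same localization of $\sigma(|S|)$ via $\sup_{\lambda \in \sigma(|S|)} |1 - \lambda^{-1}|$) reproduce the right-hand side of the lemma exactly, which is likely the route the authors take.
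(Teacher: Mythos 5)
Your proof is correct and follows essentially the same route as the paper: both arguments localize $\sigma(|S|)$ in $[1-\eps,1+\eps]$ via $(1-\eps)^2I\le S^*S\le(1+\eps)^2I$, and the cruder splitting you mention at the end, $\|S-S|S|^{-1}\|\le\|S\|\,\|I-|S|^{-1}\|\le(1+\eps)\,\eps(1-\eps)^{-1}$, is exactly the paper's final estimate. Your primary route via the identity $S-S|S|^{-1}=W(|S|-I)$ with $W=S|S|^{-1}$ unitary is in fact slightly sharper, yielding $\||S|-I\|\le\eps$, which is stronger than the stated bound.
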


\begin{proof}
If $S=V+R$ with $\|R\|\le\eps$ then $(1-\eps)^2I\le S^*S\le(1+\eps)^2I$. It follows that $S$ is invertible. Since $(1+\eps)^{-1}I\le |S|^{-1}\le(1-\eps)^{-1}I$ and
$\|S\|\le(1+\eps)$, we obtain $\|S-S|S|^{-1}\|\le(1+\eps)\left\|I-|S|^{-1}\right\|\le(1+\eps)\,\eps(1-\eps)^{-1}$.
\end{proof}

\begin{lemma}
\label{contour_cut}
Suppose that $\Gamma$ is a simple closed curve given by an equation of the form
\beq
\label{Gamma}
\Gamma\ =\ \{z\in\C:|z-\lambda|=\ph\left(\arg(z-\lambda)\right)\}
\eeq
where $\lambda$ is an interior point of the domain bounded by $\Gamma$ and $\ph\in C^2(\R)$ is a strictly positive $2\pi$-periodic function. 
Let $\A$  be a unital $C^*$-algebra of real rank zero, and let $T\in \M2(\A)$ be a normal operator such that $\,\sigma(T)\subset \Gamma\,$ and $\,\diag_P (T-\lambda I)\in \GL_0(\A\oplus \A)\,$. Then for every $z_0\in\Gamma$ there exists 
a normal operator $T_0$ such that $\sigma( T_0)\subset \Gamma\setminus\{z_0\}$, $T_0-\lambda I\in \GL_0(\A\oplus \A)$ and 
$\| T-T_0\|\le C_\Gamma\,\|[P,T]\|$, where $C_\Gamma$ is a constant depending only on $\Gamma$.
\end{lemma}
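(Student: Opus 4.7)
The proof strategy is to reduce to the canonical setting $\Gamma=\T$, $\lambda=0$, $z_0=-1$ via a $C^2$-diffeomorphism, then produce $T_0$ by a combined application of Lemmas~\ref{fsl2} and~\ref{gl0}. For the reduction, the map $\Phi(z):=(z-\lambda)/\varphi(\arg(z-\lambda))$ extends to a $C^2$-diffeomorphism between annular neighborhoods of $\Gamma$ and of $\T$; since $C^2\subset\OL$, this yields $\|[P,\Phi(T)]\|\le C_\Gamma\|[P,T]\|$. To transfer the $\GL_0$-hypothesis from $T-\lambda I$ to $\wt T:=\Phi(T)$, I would apply Lemma~\ref{gl0} along a continuous path realizing the diffeomorphism, where the $C^2$-bound on $\Phi$ controls both resolvent norms and commutators.

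In the canonical setting, $\wt T$ is a unitary in $\M2(\A)$ with $\diag_P\wt T\in\GL_0(\A\oplus\A)$. First observe that $\wt T$ itself lies in $\GL_0(\M2(\A))$: the straight-line path $s\mapsto(1-s)\wt T+s\diag_P\wt T$ stays in $\GL(\M2(\A))$ whenever $\|[P,\wt T]\|<1$, since $\wt T$ is unitary and its off-diagonal part has norm $\|[P,\wt T]\|$. As $\M2(\A)$ inherits real rank zero from $\A$, Lemma~\ref{fsl2} applied to $\wt T$ with parameter $\eps:=c_0\|[P,\wt T]\|$ produces a unitary $\wt T_0\in\GL_0(\M2(\A))$ with $-1\notin\sigma(\wt T_0)$ and $\|\wt T-\wt T_0\|\le\eps$.

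The hard part is refining this construction so that additionally $\|[P,\wt T_0]\|\le C\|[P,\wt T]\|$, which is required to invoke Lemma~\ref{gl0}. Inspecting the proof of \cite[Lemma~1.8]{FS}, $\wt T_0$ arises from $\wt T$ by modifying its spectrum in a neighborhood of $-1$ of angular size $\sim\eps$ via $C^2$-smooth cutoff functions whose $C^2$-norms are of order $\eps^{-1}$. Applying $C^2\subset\OL$ together with $\eps=c_0\|[P,\wt T]\|$ then yields the desired commutator bound.

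With this commutator control, the straight-line path $G_t:=(1-t)\wt T+t\wt T_0$ satisfies $\|G_t^{-1}\|\le(1-\eps)^{-1}$ and $\|[P,G_t]\|\le C\|[P,\wt T]\|$; for small $c_0$ the product $\|G_t^{-1}\|\,\|[P,G_t]\|$ is below $1$, so Lemma~\ref{gl0} delivers $\diag_P\wt T_0\in\GL_0(\A\oplus\A)$. Pulling back via $\Psi:=\Phi^{-1}$ produces $T_0:=\Psi(\wt T_0)$, with the norm bound $\|T-T_0\|\le C_\Gamma\|[P,T]\|$ from the operator Lipschitz bound on $\Psi$ and the $\GL_0$-conclusion from one more application of Lemma~\ref{gl0}.
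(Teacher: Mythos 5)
Your reduction to the unit circle and the transfer of the $\GL_0$ hypothesis along a path via Lemma \ref{gl0} match the paper's argument. The genuine gap is in the step you yourself flag as ``the hard part'': the claim that the unitary $\wt T_0$ produced by Lemma \ref{fsl2} can be arranged to satisfy $\|[P,\wt T_0]\|\le C\|[P,\wt T]\|$. Lemma \ref{fsl2} provides no such control, and your proposed justification --- that $\wt T_0$ arises from $\wt T$ by $C^2$ cutoffs localized in an arc of size $\sim\eps$ around $-1$ --- misdescribes the construction. No continuous $f$ on $\T$ with $\|f-\mathrm{id}\|_\infty\le\eps<2$ can map $\sigma(\wt T)$ off the point $-1$ when $\sigma(\wt T)$ fills a neighbourhood of $-1$: the image of a small arc through $-1$ is a connected arc pinned near its endpoints, hence still passes through $-1$. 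Removing $-1$ genuinely requires splitting the spectral subspace near $-1$ by a new projection supplied by the real rank zero hypothesis (this is the content of Lin's theorem behind \cite[Lemma 1.8]{FS}), and the commutator of that projection with $P$ is completely uncontrolled. So with $\eps\sim\|[P,\wt T]\|$ you get $\|\wt T-\wt T_0\|$ small, but no bound at all on $\|[P,\wt T_0]\|$, and the final application of Lemma \ref{gl0} collapses.

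The paper circumvents this by never applying Lemma \ref{fsl2} to a non-block-diagonal operator. It first forms $S=\diag_P\Phi(T)$, shows $S\in\GL_0(\A\oplus\A)$ by Lemma \ref{gl0}, passes to the unitary polar part $U=S|S|^{-1}$ (still block diagonal, and within $C_\Gamma\|[P,T]\|$ of the unitary $\Phi(T)$ by Lemma \ref{l:radial}), and then invokes Lemma \ref{fsl2} \emph{inside the subalgebra} $\A\oplus\A$, which also has real rank zero. The resulting $U_0$ is then exactly block diagonal, so $[P,U_0]=0$ and $U_0\in\GL_0(\A\oplus\A)$ come for free, with no commutator estimate for the Lin-type perturbation needed. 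Note also that the stated conclusion $T_0-\lambda I\in\GL_0(\A\oplus\A)$ forces $T_0$ to be block diagonal, by the definition of $\GL_0(\A\oplus\A)$ in Section \ref{s:main}; your $T_0=\Psi(\wt T_0)$ is not block diagonal, so even after repairing the commutator estimate you would only obtain $\diag_P(T_0-\lambda I)\in\GL_0(\A\oplus\A)$, which is weaker than the lemma asserts and than what is used later (for instance, $[P,V_0]=0$ in Part 1 of the proof of Theorem \ref{t:u2}).
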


\begin{proof}
Without loss of generality, we can assume that $\lambda =0$ and $z_0\in\R_-$. 
Also, it is sufficient to prove the lemma assuming 
that $\|[P,T]\|$ is small enough, since we have $\| T_0-T\|\le C_\Gamma\,\|[P,T]\|$ for 
any normal operator $T_0$ with $\sigma(T_0)\subset\Gamma$ if $\|[P,T]\|\ge\eps$ and $C_\Gamma=2\eps^{-1}\diam\Gamma$.

Let $\ph_t(z)=z\left(t\ph(\arg z)+1-t\right)^{-1}$. The functions $\ph_t$ belong to $C^2$ on an annulus $\Omega$ containing $\Gamma$, and their $C^2(\Omega)$-norms are bounded by a constant depending only on $\Gamma$. Obviously, they can be extended to $C^2$-functions on $\C$ whose $C^2$-norms admit a similar estimate.

The operator $\ph_1(T)$ is unitary because $\ph_1$ maps $\Gamma$ onto the unit circle. By \eqref{comm-l}, we have $\|[\ph_t(T),P]\|\le C'_\Gamma\,\|[P,T]\|$ for all $t\in[0,1]$ with a constant  $C'_\Gamma$ depending only on $\Gamma$. If $\|[P,T]\|$ is sufficiently small, Lemma \ref{gl0} with $G_t=\ph_t(T)$ implies that $S:=\diag_P \ph_1(T)$ belongs to $\GL_0(\A\oplus \A)$. 

Since $\|S-\ph_1(T)\|=\|[\ph_1(T),P]\|\le C'_\Gamma\,\|[P,T]\|$, the operator $S$ is close to the unitary operator $\ph_1(T)$. If $U=S|S|^{-1}$ then $U\in\GL_0(\A\oplus \A)$ and, by Lemma \ref{l:radial}, $\|S-U\|\le C''_\Gamma\,\|[P,T]\|$ where  $C''_\Gamma$ depends only on $\Gamma$.

Now, applying Lemma \ref{fsl2}, we find a unitary $U_0\in\GL_0(\A\oplus \A)$ such that $\|U-U_0\|\le\|[P,T]\|$ and $-1\notin\sigma(U_0)$, and define $T_0=\phi_1^{-1}(U_0)$ where $\ph_1^{-1}(z)=z\left(\ph(\arg z)\right)$ is the inverse function. Since $\varphi_1^{-1}$ maps the unit circle onto $\Gamma$ and $\phi_1(-1)=z_0$, we have $\sigma( T_0)\subset \Gamma\setminus\{z_0\}$. Since $\ph_1^{-1}$ belongs to $C^2$ on a neighbourhood of the unit circle, it can be extended to an operator Lipshitz function on $\C$. Therefore the inequality $\| T-T_0\|\le C_\Gamma\,\|[P,T]\|$ follows from the estimate
$$
\|\ph_1(T)-U_0\|\le\|\ph_1(T)-S\|+\|S-U\|+\|U-U_0\|\le\left(C'_\Gamma+C''_\Gamma+1\right)\|[P,T]\|.
$$
Finally, $T_0\in \GL_0(\A\oplus \A)$ because the complement of its spectrum is a dense connected set (see Remark \ref{gl-0}).
\end{proof}

\begin{remark}
One can easily extend Lemma \ref{contour_cut} to a much wider class of curves $\Gamma$, but \eqref{Gamma} will be sufficient for our purposes. 
\end{remark}

\subsection{Proof of Theorem \ref{t:u1}}
Let $V\in \B(H\oplus H)$ be a unitary operator such that $T=V|T|$. 
Let us denote 
$\delta_1:=d_2(T,0)$ and choose $T_0\in \GL_0(\A\oplus \A)$ such that $\|T-T_0\|\le 2\delta_1$. We have $T_0=V_0|T_0|$ with 
$|T_0|\in \GL_0(\A\oplus \A)$ and a unitary $V_0\in \GL_0(\A\oplus \A)$.

Let $\rho_1\in C^{\infty}(\mathbb R_+)$ be a nonincreasing function such that $\rho_1(t)=1$ for $t\in[0,\frac12]$ and $\rho_1(t)=0$ for $t\ge 1$. Define $\,\rho_2:=\sqrt{1-\rho_1^2}\,$ and consider the operator
\beq
\label{sm}
S=\rho_1(T^*T)V_0\rho_1(T^*T)+V \rho_2^2(T^*T).
\eeq
We have 
$V\rho_2^2(T^*T)=\chi(T)$, where 
$\chi(z)=z|z|^{-1}\rho_2^2(|z|^2)$ is a $C^\infty$-function. Thus 
$S\in \M2(\A)$. Since $\rho_1(|z|^2)\in\OL(\C)$, $\chi(z)\in\OL(\C)$ and $[P,V_0]=0$, 
from 
\eqref{comm-l} and \eqref{intro1} it follows that
$$
\|[P,S]\|\le 2\|[P,\rho_1(T^*T)]\|+\|[P,\chi(T)]\|\le 
C_1\|[P,T]\|\le C_1\,\delta_1.
$$ 
Here and in the rest of the proof $C$ with a subscript denotes a constant depending only on the choice of $\rho_1$. 

By Lemma \ref{smoothmod},
\begin{align*}
\|\rho_1(T^*T)-\rho_1(T_0^* T_0)\| &\le C_2\|T-T_0\|\le 2C_2\,\delta_1,\\
\|\rho_1(TT^*)-\rho_1(T_0 T_0^*)\| &\le C_2\|T-T_0\|\le 2C_2\,\delta_1.
\end{align*}
Since $TT^*=T^*T$, these estimates and the identity $\rho_1(T_0 T_0^*)V_0=V_0\rho_1(T_0^* T_0)$ imply that
\beq
\label{svo1}
\|S-V_0 \rho_1^2(T_0^* T_0)-V \rho_2^2(T^*T)\|\le C_3\,\delta_1.
\eeq
The function $\rho_2(t)$ vanishes in a neighbourhood of zero. Hence $\rho_2^2(t^2)=t \psi(t^2)$ with a smooth bounded function $\psi$ and, by Lemma \ref{smoothmod},
\begin{multline}
\label{svo2}
\|V_0 \rho_2^2(T_0^* T_0)-V\rho_2^2(T^* T)\|=\|V_0 |T_0|\psi(T_0 ^* T_0)-V |T| \psi(T^* T)\|\\=
\|T_0 \psi(T_0^* T_0)-T \psi(T^* T)\|\le C\|T-T_0\|\le 2C\delta_1.
\end{multline}
Combining \eqref{svo1} with \eqref{svo2} and using the identity $\rho_1^2+\rho_2^2\equiv1$, we obtain
$\|S-V_0\|\le C_4\,\delta_1$.

Let us assume that $\delta$ in the statement of the lemma is so small that $C_4\delta_1<1$. If $U=S|S|^{-1}$ then, by Lemma \ref{l:radial}, 
\beq
\label{est2}
\|U-V_0\|\le \|U_{}-S_{}\|+\|S_{}-V_0\|\le  C_5\,\delta_1.
\eeq

If $\delta$ is small then the spectrum of $S^*S_{}$ lies in a small neighbourhood of 1.  Hence the operator $|S|^{-1}$ can be expressed as a smooth function of $S_{}^*S_{}$ 
supported on a small interval containing 1.
As $\|[P,S]\|\le C_1\,\delta_1$, 
we have $\|[P,|S|^{-1}]\|\le C_6\,\delta_1$. These two estimates imply (u$_4$). 

Let $G_t=(1-t)V_0+tU$. If $C_5\,\delta<\frac13$ then, in view of \eqref{est2}, $\|[P,G_t]\|<\frac23$ and $\|G_t^{-1}\|<\frac32$ for all $t\in[0,1]$. Applying Lemma \ref{gl0}, we obtain (u$_1$). 

Finally, we have (u$_2$) and (u$_3$) because $[S,\Pi_r]=0$ and $\rho_1(T^*T)\Pi_r=\rho_1(T^*T)=\Pi_r\rho_1(T^*T)$ for all $r\ge1$.
 \qed

\subsection{Proof of Theorem \ref{t:u2}}
Multiplying $T$ by a constant, we can assume that $\theta=0$ and, consequently, $\sigma(T)\cap \O_3\subset\R$. The proof consists of two parts. 

\subsubsection{Part 1}
Suppose first that $\sigma(T)\subset\Gamma$, where $\Gamma$ is a closed curve of the form  \eqref{Gamma} containing the line segment $[-3,3]$.

Denote $\delta_1:=\|[P,T]\|$. By Lemma \ref{contour_cut}, there exists a normal operator $T_0\in \GL_0(\A\oplus \A)$ 
such that $\sigma(T_0)\subset \Gamma\setminus \{0\}$ and $\|T-T_0\|\le C_\Gamma\,\delta_1$. We have  $T_0=V_0|T_0|$ with 
$|T_0|\in \GL_0(\A\oplus \A)$ and a unitary $V_0\in \GL_0(\A\oplus \A)$. 

Let $\rho_1$, $\rho_2$ and $V$ be as in the proof of Theorem \ref{t:u1}. Consider the operator
$$
S=\rho_1(T^*T)(\re V_0)\rho_1(T^*T)+V \rho_2(T^*T)^2.
$$
Since $\rho_1(T^*_0T_0)(\re V_0)\rho_1(T^*_0T_0)=V_0\rho^2_1(T^*_0T_0)$,
the same arguments as in the proof of Theorem \ref{t:u1} show that
$$
\|S-V_0 \rho_1(T_0^* T_0)^2-V \rho_2(T^*T)^2\|\le C_{\Gamma,1}\,\delta_1
$$
and $\|S-V_0\|\le C_{\Gamma,2}\delta_1$, where $C_{\Gamma,j}$ are constants depending only on $\Gamma$. 

If $U=S|S|^{-1}$ then, in the same way as in the proof of Theorem \ref{t:u1}, we obtain (u$_1$)--(u$_3$) and (u$'_4$) with some constant $C_\Gamma$ depending only on $\Gamma$. The condition (u$'_5$) is fulfilled because the operator $\Pi_2S\Pi_2$ is self-adjoint, and so is the unitary operator $\left.U\right|_{\ran\Pi_2}$. 

\begin{remark}\label{r:part1}
Note that 
\begin{enumerate}
\item[(a)]
the above proof works under the weaker assumption $\sigma(T)\cap \O_2\subset(-2,2)$;
\item[(b)]
we only need to check that $\,\diag_P(T_j-\lambda I)\in\GL_0(\A\oplus\A)$ for one point $\lambda$ lying in the domain bounded by $\Gamma$;
\item[(c)]
the constructed operator $U$ satisfies stronger the conditions (u$_2$) and (u$_3$), which imply (u$'_2$), (u$'_3$). 
\end{enumerate}
\end{remark}

\subsubsection{Part 2} 
Let now $\sigma(T)$ be an arbitrary closed bounded set such that $\sigma(T)\cap \O_3\subset(-3,3)$. First of all, let us note that the statement of Theorem \ref{t:u2} is local in the following sense. 

Assume that we have found a unitary $U_j$ satisfying the conditions of Theorem \ref{t:u2} for another normal operator $T_j$. If 
\begin{enumerate}
\item[(i)]
the spectral projection of $T_j$ corresponding to $\O_2$ coincides with $\Pi_2$ and $T_j\Pi_2=T\Pi_2$,
\item[(ii)]
$\|[P,T_j]\|\le C_j\,\|[P,T]\|$ where the constant $C_j$ does not depend on $\A$ and $T$,
\item[(iii)] 
$\,\diag_P(T_j\pm iI)\in\GL_0(\A\oplus\A)$,
\end{enumerate}
then (u$_1$) and (u$'_2$)--(u$'_5$) hold for $T$ and $U=U_j$. 

Hence, in view of Part 1, it is sufficient to find a normal operator $T_j$ satisfying the conditions (i)--(iii) whose spectrum lies on a curve of the form \eqref{Gamma} with $\lambda=i$. Furthermore, in view of Remark \ref{r:part1}(b), if $\sigma(T_j)\subset\Gamma$ then we have to prove the inclusion (iii) only for $\diag_P(T_j-iI)$.
The construction proceeds in several steps. 

\begin{figure}[H]
\begin{center}
\includegraphics[scale=0.7]{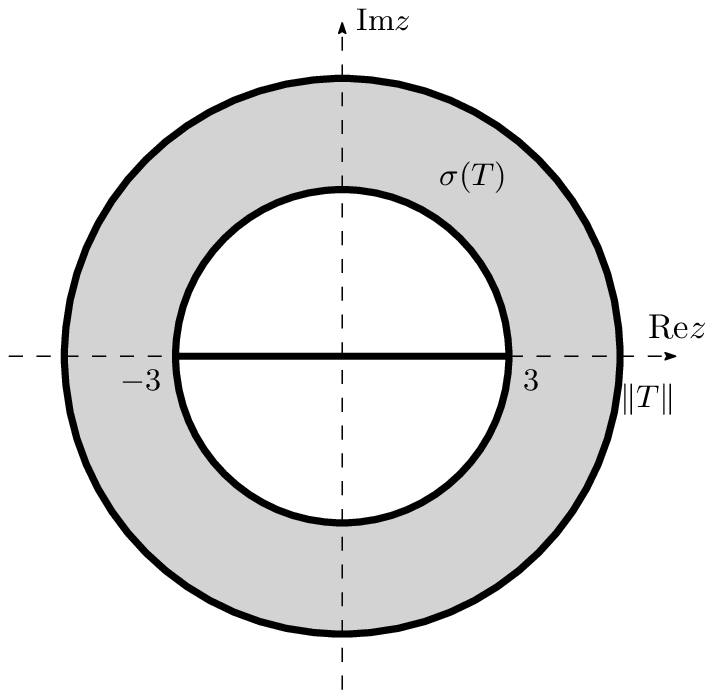}
\includegraphics[scale=0.7]{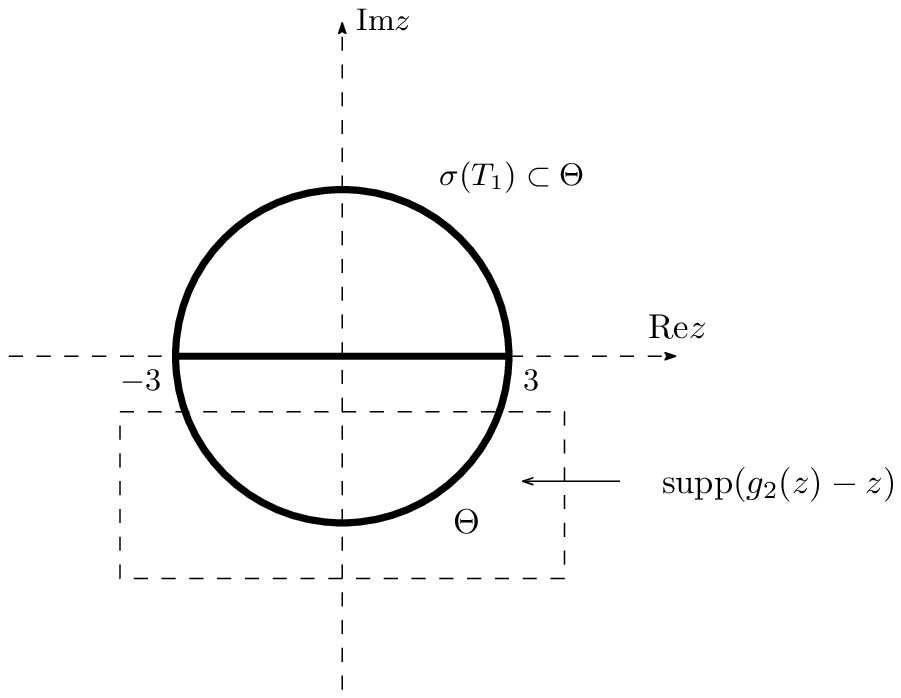}
\end{center}
\caption{The spectra of $T$ and $T_1=g_1(T)$}
\label{zz}
\end{figure}

First, let us choose a function $g_1\in C^2(\C)$ such that 
$g_1(z)=3z/|z|$ for $|z|\ge 3$, $g_1(z)=z$ for $|z|\le 2$, and $g_1(z)/z>0$ for all $z\neq 0$. Put $T_1=g_1(T)$. Then $\sigma(T_1)\subset\Theta=(-3;3)\cup\partial\O_3$, see Figure \ref{zz}. Clearly, $T_1$ satisfies (i). In view of \eqref{comm-l}, we also have (ii). Finally, if $\delta$ is small enough then  the paths $G_t=t g_1(T)+(1-t)T\pm iI$ satisfy the assumptions of Lemma \ref{gl0}, which implies the inclusions (iii).

Let $g_2$ be a smooth function mapping the arc of $\Theta$ between $(-3-3i)/\sqrt{2}$ and $(3-3i)/\sqrt{2}$ into the line segment $[-2-2i;2-2i]$ such that $g_2(z)=z$ outside the lower rectangle in the right part of 
Figure \ref{zz}. We have $g_2\in \OL(\C)$ since it is a smooth compactly supported perturbation of $z$. Let $\Theta_2=g_2(\Theta)$ and $T_2=g_2(T_1)$, so that $\sigma(T_2)\subset\Theta_2$. For the same reasons as before, the operator $T_2$ satisfies (i)--(iii).

There is a function $g_3\in\OL(\C)$ such that $g_3(z)=z$ outside the upper rectangle of Figure \ref{f2}, $g(z)-z\in C^2(\C)$ and $g_3(\Theta_2)$ is a simple curve given by an equation of the form \eqref{Gamma}.
Note that $g_3\colon\C\to \C$ is not a 
diffeomorphism, as $g_3$ maps two arcs of $\Theta_2$ into one.
Let $T_3=g_3(T_2)$. The same arguments as for $T_1$ show that $T_3$ satisfies (ii) and $\diag_P T_3-iI\in \GL_0(\A\oplus \A)$.

\begin{figure}[H]
\begin{center}
\includegraphics[scale=0.7]{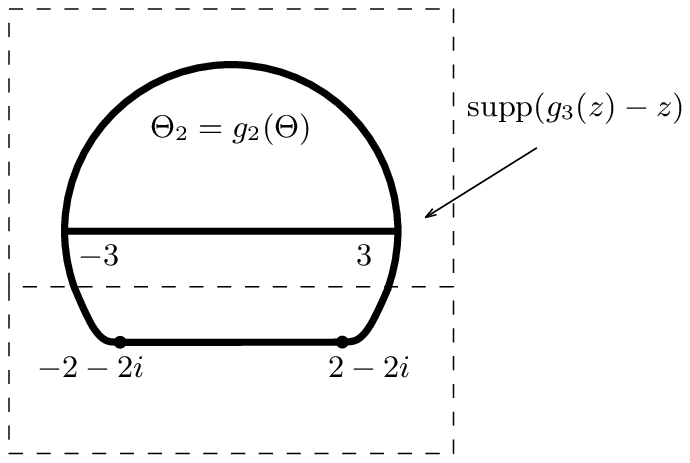}
\includegraphics[scale=0.7]{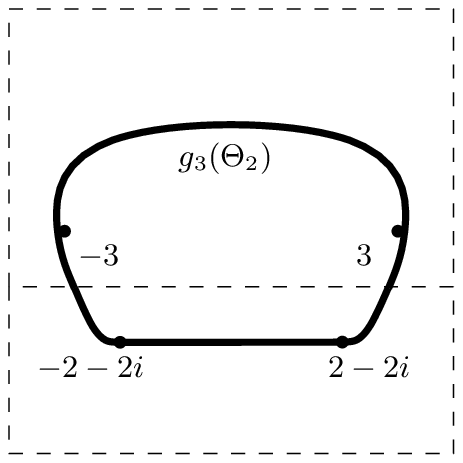}
\end{center}
\caption{The spectra of $T_2=g_2(T_1)$ and $T_3=g_3(T_2)$}
\label{f2}
\end{figure}

Applying Part 1 to the operator $T_3+2iI$, we obtain a unitary operator $U_3$ satisfying (u$_1$)--(u$_3$), (u$'_4$) and (u$'_5$) with $\Pi_r$ being the spectral projections of $T_3+2iI$. Note that $g_2(z)\equiv z$ on  $\O_2(-2i)\cap\sigma(T_2)$. Therefore $\Pi_r$ coincide with the spectral projections $\Pi_r(-2i)$ of the operator $T_2$ corresponding to the discs $\O_r(-2i)$ for all $r\le2$. 

Define 
$$
T_4\ =\ (U_3-2iI)\rho(|T_2+2iI|)+T_2(I-\rho(|T_2+2iI|))
$$ 
where $\rho$ is a nonincreasing $C^\infty$-function on $\R_+$ such that $\rho(t)=1$ for $t\le1$ and $\rho(t)=0$ for $t\ge2$. Since $U_3$ commutes with $\Pi_r(-2i)$ for $r\le2$ and coincides with the polar part $V_2$ of $T_2+2iI$ on the range of $\Pi_2(-2i)-\Pi_1(-2i)$, the operator $T_4$ has the following block structure 
\beq
\label{blocks}
T_4\ =\ (U_3-2iI)\Pi_1(-2i)\oplus \tilde T_2\bigl(\Pi_2(-2i)-\Pi_1(-2i)\bigr)\oplus T_2\bigl(I-\Pi_2(-2i)\bigr),
\eeq
where $\tilde T_2=T_2+(V_2-T_2-2iI)\rho(|T_2+2iI|)$. All operators in the right hand side of \eqref{blocks} are normal, and their spectra do not contain the point $-2i$. Thus $T_4$ is a normal operator whose spectrum is contained in 
$\Theta_2$ with a part of the lower arc removed, see Figure \ref{f3}. From the construction, it is clear that $T_4$ satisfies (i). The estimate (ii) for $T_4$ follows from the inclusion $\rho(|z|)\in C^2(\C)$ and the estimate (u$'_4$) for the operator $U_3$. If $\delta$ is small enough then the path $G_t=tT_4+(1-t)T_2-iI$ satisfy the assumptions of Lemma \ref{gl0}, which implies that $\diag_P T_4-iI\in \GL_0(\A\oplus \A)$.

\begin{figure}[H]
\begin{center}
\includegraphics[scale=0.7]{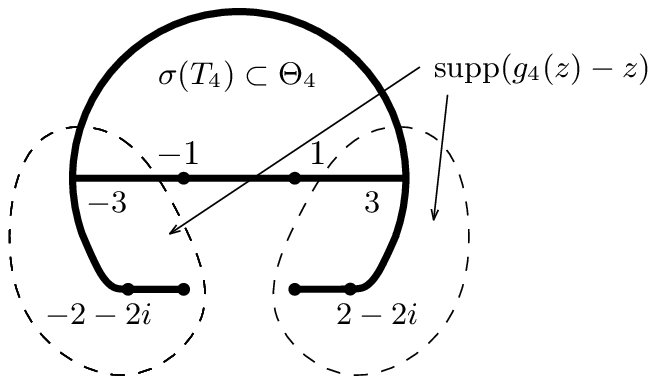}
\includegraphics[scale=0.7]{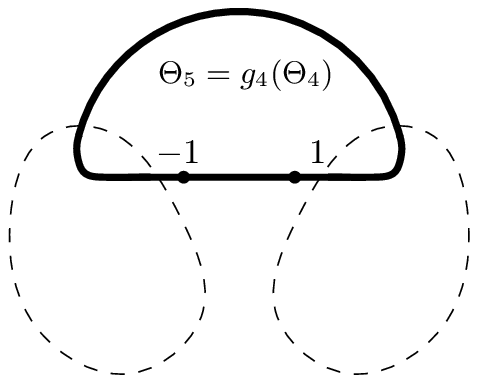}
\end{center}
\caption{The spectra of $T_4$ and $T_5=g_4(T_4)$}
\label{f3}
\end{figure}

Finally, there exists a smooth function $g_4\in\OL(\C)$ such that $g_4(z)=z$ outside the 
oval-shaped areas on Figure \ref{f3}, $g(z)-z\in C^2(\C)$ and  $g_4(\Theta_4)$ is given by an equation of the form \eqref{Gamma}. In simple words, $g_4$ maps the remaining parts of the lower arc into the end points of central line segment and does not affect the rest of $\Theta_4$, including the segment $[-2;2]$. The same arguments as before show that the operator $T_5=g_4(T_4)$ satisfies (i), (ii), and $\diag_P T_5-iI\in \GL_0(\A\oplus \A)$ provided that $\delta$ is sufficiently small. Since $\sigma(T_5)\subset g_4(\Theta_4)$, this completes the proof. \qed

\begin{remark}\label{r:all-operators}
If $\A$ is a von Neumann algebra then $\GL_0(\A\oplus\A)=\GL(\A)\oplus\GL(\A)$ (see Remark \ref{n-f}).
In this case we do not need to check the condition (iii) and can simplify Part 2 by taking $T_5=g(T)$, where $g\in C^2(\C)$ is an arbitrary function such that $g(\sigma(T))\subset\Theta_5$ and $g(z)=z$ near the line segment $[-2,2]$. 
\end{remark}

\begin{remark}\label{r:universal}
Note that the constants $C_j$ in the above proofs are determined only by the choice of the auxiliary functions $\rho$, $\rho_1$ and $g_j$. It follows that $C$ and $\delta$ in Theorems \ref{t:u1} and \ref{t:u2} are independent of $\A$ and $T$.
\end{remark}

\section{Approximation by operators with finite spectra}
\label{s:appr}

The main result of this section is  

\begin{theorem}
\label{main_approx}
There exist constants $\delta>0$ and $C>0$ such that the following is true. If $\A$ has real rank zero and $T\in \M2(\A)$ is a normal operator with $d_2(T)<\delta$ then one can find a normal operator $T_0\in \M2(\A)$ such that $\|T-T_0\|\le 3$, $\|[P,T_0]\|\le C d_2(T)$ and $\sigma(T_0)\subset \Z+ i\Z$.
\end{theorem}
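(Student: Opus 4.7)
I would construct $T_0$ from $T$ by a successive spectrum-reduction scheme in the spirit of \cite{FR,FS}, producing a finite sequence of normal operators $T = T^{(0)}, T^{(1)}, \ldots, T^{(N)} = T_0$ whose spectra shrink step by step until $\sigma(T_0) \subset \Z + i\Z$. The invariants to preserve at each stage are normality of $T^{(k)}$, the commutator bound $\|[P, T^{(k)}]\| \le C_k d_2(T)$ with $C_k$ uniform in $k$, and the cumulative distance $\|T - T^{(k)}\|$ bounded by a small absolute constant.

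The construction proceeds in two phases. Phase~1 (disc removal): for each lattice point $\lambda \in \Z + i\Z$, the hypothesis $d_2(T) < \delta$ yields $d_2(T - \lambda I, 0) = d_2(T, \lambda) < \delta$, so Theorem \ref{t:u1} applied to $T - \lambda I$ produces a unitary $U_\lambda \in \M2(\A)$ with $\|[P, U_\lambda]\| \le C d_2(T)$ which agrees with the polar part of $T - \lambda I$ outside $\O_1(\lambda)$. I would group the lattice points into a bounded number of sublattices whose associated discs $\O_1(\lambda)$ are pairwise disjoint, and for each sublattice simultaneously assemble the $U_\lambda$'s together with a smooth cutoff of $|T - \lambda I|$ into a modified normal operator whose spectrum inside each $\O_1(\lambda)$ is pushed onto the boundary circle. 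Here Lemma \ref{manysupp} is essential, since it guarantees that commutator bounds from disjointly supported modifications combine via a maximum rather than a sum. Phase~2 (segment reduction): after Phase~1 the spectrum lies on a union of unit circles centered at lattice points. I would apply Theorem \ref{t:u2}, after appropriate affine shifts and rotations aligning the relevant arcs with a straight line through a lattice point, to successively cut each arc down to a line segment and finally to its endpoints in $\Z + i\Z$. The hypothesis $\diag_P(T^{(k)} \pm ie^{i\theta}I) \in \GL_0(\A\oplus\A)$ required by Theorem \ref{t:u2} would be verified at each step by a homotopy argument via Lemma \ref{gl0}, connecting the current operator to the previous one for which the $\GL_0$ membership was already known.

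The norm estimate $\|T - T_0\| \le 3$ follows from the fact that each modification moves any spectral point by at most the size of a unit cell plus a small margin, matching the radii that appear in Theorems \ref{t:u1} and \ref{t:u2}; the uniformity of $C$ in the commutator estimate follows from Remark \ref{r:universal} and the fact that only a bounded number of sublattice classes are needed. The main obstacle is organizational: arranging the iteration so that normality and the $\GL_0$ hypotheses of Theorems \ref{t:u1} and \ref{t:u2} continue to hold after each reduction, and so that the commutator estimate does not accumulate additively over the finitely many iterations and lattice points. The smallness $d_2(T) < \delta$ is calibrated precisely so that each successive invocation of the auxiliary theorems meets its own smallness hypothesis.
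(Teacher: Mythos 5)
Your overall framework (simultaneous hole-cutting via Theorems \ref{t:u1} and \ref{t:u2}, combined through Lemma \ref{manysupp} so the commutator bounds do not accumulate, with Lemma \ref{gl0} homotopies maintaining the $\GL_0$ hypotheses) is the right one, and your Phase~1 is essentially the paper's Lemma \ref{manyholes}. But the geometric core of the plan has a gap. The open unit discs $\O_1(\lambda)$, $\lambda\in\Z+i\Z$, cover the whole plane, so you cannot clear all of them: removing a disc pushes spectrum into the annulus $\O_2(\lambda)\setminus\O_1(\lambda)$, i.e.\ back into the discs of neighbouring lattice points, and iterating over sublattices undoes earlier clearings. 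Moreover the simultaneous hole-cutting requires the centres to be at mutual distance at least $4$ (the modifications live on $\O_2(\lambda_j)$ and must not interact), which is why the paper rescales to a $6\times6$ grid and cuts holes only at the \emph{centres} of the cells, never at every lattice point.

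The missing idea is the retraction. After cutting holes at cell centres the spectrum is still two-dimensional --- it fills each cell minus a disc --- and is not ``a union of unit circles'' as you assert. The paper then applies the operator Lipschitz map $g(z)=\psi(\re z)+i\,\psi(\im z)$, which collapses the punctured cells onto the one-dimensional grid of cell boundaries. Only after this does Theorem \ref{t:u2} become applicable: its hypothesis is that $\sigma(T)\cap\O_3$ lies on a \emph{straight line}, so it cannot be invoked on arcs of circles as in your Phase~2. The paper then cuts holes at the midpoints of the grid edges with Theorem \ref{t:u2} (checking the $\GL_0$ condition there by a further Lemma \ref{gl0} homotopy from the cell centres) and applies $g$ once more to collapse the remaining spectrum onto the vertices $6\Z+i\,6\Z$; rescaling by $1/6$ gives $\sigma(T_0)\subset\Z+i\Z$. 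This fixed two-round structure --- holes at centres, snap to grid, holes at edge midpoints, snap to vertices --- is also what yields $\|T-T_0\|\le3$; your scheme, producing errors of size about $3$ per sublattice round at unit scale, could not meet that bound.
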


\begin{remark}
\label{r:approx}
The above theorem can be thought of as an approximation result, since it holds with the same constants for operators $T$ with arbitrarily large norms. If $d_2(T)$ is sufficiently small then, applying Theorem \ref{main_approx} to $\eps^{-1}T$, we see that $T$ 
is approximated by an almost block diagonal operator $\eps T_0$ with finite spectrum $\sigma\bigl(\eps T_0\bigr)\subset \eps\Z+ i\eps\Z$.
\end{remark}

As was mentioned in Section \ref{s:main}, Theorem \ref{main_approx} is proved by removing certain subsets from the spectrum $\sigma(T)$. 
Theorems \ref{t:u1} and \ref{t:u2} from Section \ref{s:aux} allow us to remove discs from $\sigma(T)$ or, in other words, to cut holes in the spectrum. Note that, for our purposes, it is important to be able to cut arbitrarily many holes in one go. If we removed discs one by one repeatedly applying Theorems \ref{t:u1} and \ref{t:u2} as in  \cite{FS,FR,FR2}, then the error would accumulate and it would be difficult to effectively control the norm of the commutators with $P$. The following lemma deals with this issue, providing an estimate for $\|[P,T']\|$ independent of the number of holes.

\begin{lemma}
\label{manyholes}
There exist constants $\delta'>0$ and $C'>0$ such that the following is true for all $C^*$-algebras $\A$ of real rank zero and all normal $T\in \M2(\A)$. If $\lambda_{1},\ldots,\lambda_m\in \C$ is a finite collection of points such that $\dist(\lambda_i,\lambda_j)\ge 4$ for $i\neq j$ and each operator $T-\lambda_j I$ satisfies the conditions of Theorem {\rm\ref{t:u1}} or Theorem {\rm\ref{t:u2}} with $\delta<\delta'$ then one can find a normal $T'\in \M2(\A)$ with the following properties.
\begin{enumerate}
\item[{\rm(f$_1$)}]
$\|T-T'\|\le 3$.
\item[{\rm(f$_2$)}]
$\sigma(T')\cap\O_1(\lambda_j)=\varnothing$ for all $j=1,\ldots,m$.
\item[{\rm(f$_3$)}]
$\diag_P(T'-\lambda_j I)\in \GL_0(\A\oplus \A)$ for $j=1,\ldots,m$. 
\item[{\rm(f$_4$)}]
$[T',\Pi_{r}^j]=0$ and $\l.T'\r|_{\Ran \left(I-\sum_j\Pi_{2}^j\right)}=
\l.T\r|_{\Ran\left(I-\sum_j \Pi_{2}^j\right)}$  for all $r\in[1,2]$ and all $j=1,\ldots,m$, where $\Pi_r^j$ denote the spectral projections of $T$ corresponding to the discs $\O_{r}(\lambda_j)$.
\item[{\rm(f$_5$)}]
$\|[P,T']\|\le C'\delta$.
\item[{\rm(f$_6$)}]
If $\sigma(T)\cap\O_3(\lambda_j)$ lies on a straight line of the form $e^{i\theta}\R+\lambda_j$ then $\,\sigma(T')\cap\O_2(\lambda_j)\subset e^{i\theta}\R+\lambda_j$.
\end{enumerate}
\end{lemma}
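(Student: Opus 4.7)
The plan is to apply Theorem~\ref{t:u1} or~\ref{t:u2} to each $T-\lambda_j I$ to produce unitaries $U_j\in\M2(\A)$, and then glue them into a single operator $T'$ via a smooth radial cutoff $\chi\in C_0^\infty(\C)$ with $\chi\equiv 1$ on $\O_1$ and $\supp\chi\subset\O_2$. Setting $\chi_j(z):=\chi(z-\lambda_j)$ and $\rho_j:=\chi_j^2$, define
\[
T'\ :=\ T+\sum_{j=1}^m(\lambda_j I+U_j-T)\,\rho_j(T).
\]
Since $\dist(\lambda_i,\lambda_j)\ge 4$, the discs $\O_2(\lambda_j)$ are disjoint and the $\chi_j(T)$ have mutually orthogonal ranges.

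Properties (f$_1$), (f$_2$), (f$_4$), (f$_6$) follow from case analysis on the $T$-spectral subspaces. On $\Ran\Pi_1^j$ only $\rho_j=I$ survives, so $T'=\lambda_j I+U_j$ with spectrum on $\{|z-\lambda_j|=1\}$. On $\Ran(\Pi_2^j-\Pi_1^j)$, the identity $U_j=V_j$ (from (u$_3$), (u$'_3$), or the strengthened form established in Part~1 of the proof of Theorem~\ref{t:u2}) converts $T'$ into $\lambda_j I+V_j\,h(|T-\lambda_j I|)$ with $h(s):=s(1-\chi(s)^2)+\chi(s)^2\in[1,2]$ for $s\in[1,2]$; outside all $\O_2(\lambda_j)$, $T'=T$. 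Since $|\lambda_i-\lambda_j|\ge 4$, the spectrum on every block avoids every $\O_1(\lambda_j)$. Commutation $[T',\Pi_r^j]=0$ for $r\in[1,2]$ comes from (u$_2$)/(u$'_2$) in the block $k=j$, and for $k\ne j$ from the inclusion $\Ran\Pi_r^j\subset\Ran(I-\Pi_1^k)$ (by disc separation), where $U_k$ coincides with the polar part $V_k$, a function of $T$. Property (f$_6$) is obtained similarly, using (u$'_5$) on $\Ran\Pi_1^j$ and the invariance of $e^{i\theta}\R+\lambda_j$ under $V_j$.

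For (f$_5$) I would write $T'-T=\sum_j\chi_j(T)S_j\chi_j(T)$ with $S_j:=\lambda_j I+U_j-T$, where $\chi_j(T)$ commutes with $S_j$ because the blocks $\Ran\Pi_1^j$ and $\Ran(\Pi_2^j-\Pi_1^j)$ are $U_j$-invariant, $\chi_j(T)\equiv I$ on the first, and on the second $U_j=V_j$ is a function of $T$. Since $\|S_j\chi_j(T)\|\le 3$, the proof of Lemma~\ref{manysupp} (which only uses $\|S_j\chi_j(T)\|$ rather than $\|S_j\|$) yields
\[
\|[P,T'-T]\|\ \le\ C_\chi\,\|[P,T]\|\cdot\max_j\|S_j\chi_j(T)\|+\max_j\|[P,S_j]\|,
\]
and the bounds $\|[P,T]\|\le d_2(T,\lambda_j)\le\delta$ (from~\eqref{intro1}) together with $\|[P,U_j]\|\le C\delta$ (from (u$_4$)/(u$'_4$)) give $\|[P,T']\|\le C'\delta$.

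The main obstacle is (f$_3$), which I would establish by two successive applications of Lemma~\ref{gl0}. Introduce the auxiliary operator
\[
\tilde T_j\ :=\ (T-\lambda_j I)(I-\Pi_2^j)+(T'-\lambda_j I)\Pi_2^j,
\]
and note that by (f$_4$), $T'-\lambda_j I$ decomposes block-diagonally with respect to the mutually orthogonal projections $\Pi_2^j$, the $\Pi_2^k$ ($k\ne j$), and $I-\sum_k\Pi_2^k$. On the ``far'' blocks $\Ran\Pi_2^k$ ($k\ne j$), both $T-\lambda_j I$ and $T'-\lambda_j I$ lie within distance $2$ of the scalar $(\lambda_k-\lambda_j)I$, whose absolute value is $\ge 4$; hence the straight-line interpolation between them is invertible with inverse of norm $\le 1/2$, and Lemma~\ref{gl0} applied along this path transfers $\diag_P\tilde T_j\in\GL_0(\A\oplus\A)$ to $\diag_P(T'-\lambda_j I)\in\GL_0(\A\oplus\A)$. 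To obtain the former, I would use the convex path $H_t:=tU_j+(1-t)\tilde T_j$: on $\Ran\Pi_1^j$ one has $H_t=U_j$, and on $\Ran(I-\Pi_1^j)$ the relation $U_j=V_j$ combined with $|T-\lambda_j I|\ge 2$ on $\Ran(I-\Pi_2^j)$ and $|T'-\lambda_j I|\ge 1$ on $\Ran(\Pi_2^j-\Pi_1^j)$ gives $H_t=V_j\,g_t$ with a self-adjoint $g_t\ge I$, so $\|H_t^{-1}\|\le 1$ throughout, while $\|[P,H_t]\|\le C\delta$ is uniform. Since $\diag_P U_j\in\GL_0(\A\oplus\A)$ by (u$_1$), Lemma~\ref{gl0} concludes. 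The separation $\dist(\lambda_i,\lambda_j)\ge 4$ is essential both for the orthogonality of the cutoffs and for the invertibility of the far-block interpolation.
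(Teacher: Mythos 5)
Your construction of $T'$ is exactly the one in the paper (expanding your formula gives $T'=\sum_j(\lambda_jI+U_j)\chi_j^2(T)+T\{I-\sum_j\chi_j^2(T)\}$), and your treatment of (f$_1$), (f$_2$), (f$_4$), (f$_5$), (f$_6$) — block analysis with respect to $\Pi_1^j,\Pi_2^j$, plus Lemma \ref{manysupp} with $S_j=\lambda_jI+U_j-T$ — coincides with the paper's, including the correct observation that only $\|S_j\chi_j(T)\|$ and $[S_j,\chi_j(T)]=0$ are needed there. Your first homotopy for (f$_3$) (interpolating on the far blocks to pass from $\tilde T_j$, which is the paper's $\mathcal T_j-\lambda_jI$, to $T'-\lambda_jI$) is also the paper's argument.

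The gap is in your second homotopy $H_t=tU_j+(1-t)\tilde T_j$, in the case where $T-\lambda_jI$ falls under Theorem \ref{t:u2}. Your factorization $H_t=V_jg_t$ with $g_t\ge I$ on $\Ran(I-\Pi_1^j)$ requires $U_j=V_j$ on all of $\Ran(I-\Pi_1^j)$. Theorem \ref{t:u1} does provide this via (u$_3$), so your argument is fine (and slightly more direct than the paper's) in that case. But Theorem \ref{t:u2} only gives (u$'_3$), i.e.\ $U_j=V_j$ on $\Ran(\Pi_2^j-\Pi_1^j)$; the ``strengthened form from Part 1'' you invoke is not available, because in Part 2 of that proof $U_j$ is produced from a modified operator ($T_5$), whose polar part differs from $V_j$ outside $\O_2(\lambda_j)$. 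On $\Ran(I-\Pi_2^j)$ you therefore know nothing about $U_j$ beyond unitarity, and a convex combination of a unitary and an invertible operator with $\|S^{-1}\|\le\tfrac12$ need not be invertible (already for the scalars $-1$ and $2$ the segment passes through $0$). The paper avoids this by using, in the Theorem \ref{t:u2} case, the path $G_t=t(\mathcal T_j-\lambda_jI)+(1-t)(T-\lambda_jI+ie^{i\theta}I)$, whose left endpoint has $\diag_P$ in $\GL_0(\A\oplus\A)$ by the hypothesis of Theorem \ref{t:u2}, and checks $\|G_t^{-1}\|\le3$ block by block using the self-adjointness of $e^{-i\theta}(T-\lambda_jI)$ on $\Ran\Pi_2^j$ together with (u$'_5$). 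You need some such replacement for the Theorem \ref{t:u2} case; as written, (f$_3$) is not established there. (A minor related imprecision: your justification of $[T',\Pi_r^j]=0$ for $k\ne j$ also appeals to $U_k=V_k$ on $\Ran(I-\Pi_1^k)$, but there the claim follows simply from $\chi_k^2(T)\Pi_r^j=0$ and $[U_k,\chi_k(T)]=0$, so no harm is done.)
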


\begin{proof}
Let $U_j$ be the unitary operators obtained by applying Theorem \ref{t:u1} or Theorem \ref{t:u2} to $T-\lambda_j I$. Let us fix a nonincreasing function
$\rho\in C^\infty(\mathbb R_+)$ such that 
$\rho(t)=1$ for $0\le t\le 1$ and $\rho(t)=0$ for $t\ge 2$, and denote $\chi_j(z)=\rho(|z-\lambda_j|)$. 
We claim that (f$_1$)--(f$_6$) hold for the operator 
\beq
\label{terdef}
T'=\sum\limits_{j=1}^m (\lambda_j I+U_j)\chi^2_j(T)+
T\Bigl\{I-\sum\limits_{j=1}^m \chi^2_j(T)\Bigr\}.
\eeq

Since the function $\chi_j(z)$ vanishes for $|z-\lambda_j|\ge 2$ and is equal to $1$ for $|z-\lambda_j|\le 1$, the condition (u$'_2$) implies that the operators in the first sum have block structure with respect to $\Pi_1^j$ and $\Pi_2^j$. So do the operators in the second sum, 
and hence the operator $T'$. From \eqref{terdef} and (u$'_3$) it follows that
\begin{enumerate}
\item[(1)]
the ``small'' blocks $\left.T'\right|_{\Ran\Pi_{1}^j}$ are the unitaries $\left.U_j\right|_{\Ran\Pi_{1}^j}$ shifted by $\lambda_j$,
\item[(2)]
the ``large'' block $\left.T'\right|_{\Ran\left(I-\sum_j \Pi_{2}^j\right)}$ coincides with the corresponding block of $T$,
\item[(3)]
the ``intermediate'' 
blocks $\left.T'\right|_{\Ran\l(\Pi_{2}^j-\Pi_{1}^j\r)}$ are functions of $\left.T\right|_{\Ran\l(\Pi_{2}^j-\Pi_{1}^j\r)}$ of the form 
$f_j(z)=\lambda_j+(z-\lambda_j)\bigl\{1+\bigl(|z-\lambda_j|^{-1}-1\bigr)\chi_j^2(z)\bigr\}\,$
with $\Ran f_j\subset\overline{\O_2(\lambda_j)\setminus\O_1(\lambda_j)}$.
\end{enumerate}
Thus we see that $T'$ is a normal operator satisfying (f$_2$) and (f$_4$). By (u$'_5$), we also have (f$_6$).

In view of (u$'_2$) and (u$'_3)$, $[U_j,\chi_j(T)]=0$ and, consequently, 
\begin{equation}
\label{nholes}
T'-T\ =\ \sum\limits_{j=1}^m \chi_j(T) U_j \chi_j(T)+
\sum\limits_{j=1}^m \chi_j(T) \l(\lambda_j I-T\r)\chi_j(T).
\end{equation}
Since $\chi_j$ have mutually disjoint supports, the norm of the first sum is bounded by 1. The second sum is a function of $T$ whose modulus does not exceed $2$. These estimates imply (f$_1$). 

In view of \eqref{intro1}, (u$_4$) and (u$'_4$), we have $\|[P,T]\|\le\delta$ and $\|[P,U_j]\|\le C\delta$ for all $j$ with some universal constant $C$. Therefore, the  inequality (f$_5$) follows from \eqref{nholes} and Lemma \ref{manysupp} with $Q=P$ and $S_j=\lambda_j I+U_j -T$. Note that the constant $C'$ in (f$_5$) depends only on $C$ and the choice of $\rho$.

It remains to prove (f$_3$). Let us fix $j$, denote $\mathcal T_j=\Pi_2^jT'+(I-\Pi_2^j)T$ and consider the path $G'_t=\Pi_2^jT'+(I-\Pi_2^j)\left((1-t)T+tT'\right)$ from $\mathcal T_j$ to $T'$. From (1)--(3) it follows that the operators $G'_t-\lambda_jI$ are invertible with $\|(G'_t-\lambda_jI)^{-1}\|\le1$. The inequality $\|[P,U_j]\|\le C\delta$ and \eqref{nholes} imply that 
$\|[P,\Pi_2^j(T-T')]\|\leq C_\rho\,\delta$, where $C_\rho$ is a constant depending only on $C$ and $\rho$. Hence $\|[P,G'_t]\|$ admits a similar estimate. If $\delta$ is smaller than a constant depending only on $C$ and 
$\rho$ then $\|[P,G'_t]]\|^{-1}>1$. Thus, in view of Lemma \ref{gl0}, it is sufficient to show that $\diag_P(\mathcal T_j-\lambda_j I)\in \GL_0(\A\oplus \A)$.

If $\,T-\lambda_jI\,$ satisfies the conditions of Theorem \ref{t:u1} then  $\,\diag_PU_j$ belongs to $\GL_0(\A\oplus \A)\,$ and, in view of (1)--(3) and (u$_3$), $\,U_j=(\mathcal T_j-\lambda_jI)|\mathcal T_j-\lambda_jI|^{-1}$. 
Choosing a homotopy $\ph_t(|\mathcal T_j-\lambda_jI|)$ from $|\mathcal T_j-\lambda_jI|^{-1}$ to $I$ with $\ph_t\in C^2(\R)$ and applying Lemma \ref{gl0}, we see that $\,\diag_P(\mathcal T_j-\lambda_j I)\in \GL_0(\A\oplus \A)\,$ provided that $\delta$ is smaller than a constant depending only on $C$ and 
the choice of $\ph_t$ and $\rho$.

Assume now that $\,T-\lambda_jI\,$ satisfies the conditions of Theorem \ref{t:u2}. Then the operators $\left.e^{-i\theta}U_j\right|_{\Ran\Pi_1^j}$ and $\left.e^{-i\theta}(T-\lambda_jI)\right|_{\Ran\Pi_2^j}$ are self-adjoint and $\,\diag_P(T-\lambda_jI+ie^{i\theta} I)\in \GL_0(\A\oplus \A)\,$. Consider the path
\beq
\label{path}
G_t:=t (\mathcal T_j-\lambda_jI)+(1-t)(T-\lambda_jI+ie^{i\theta} I)\,,\quad t\in[0,1].
\eeq 
Clearly, $\|[P,G_t]\|\le C'_\rho\delta$ with a constant $C'_\rho$ depending only on $C$ and $\rho$.

In view of (2), 
$\,\left.G_t\right|_{\Ran(I-\Pi_{2}^j)}=\left.(T-\lambda_jI+(1-t)ie^\theta I)\right|_{\Ran(I-\Pi_{2}^j)}\,$ 
and, consequently, the spectra of these restrictions are subsets of $\C\setminus\O_1$. Since the operator $e^{-i\theta}\left.(T-\lambda_jI)\right|_{\Ran\left(\Pi_2^j-\Pi_1^j\right)}$ is self-adjoint, (3) implies that the spectra of $\left.G_t\right|_{\Ran\left(\Pi_2^j-\Pi_1^j\right)}$ also lie outside $\O_1$. Finally, 
\begin{align*}
\re\l(e^{-i\theta}\left.G_t\right|_{\Ran\Pi_{1}^j}\r) &=\left.\left(tU_j+(1-t)(T-\lambda_j I)\right)\right|_{\Ran\Pi_{1}^j},\\
\im\l(e^{-i\theta}\left.G_t\right|_{\Ran\Pi_{1}^j}\r) &=(1-t)I
\end{align*}
because
the operators
$\left.e^{-i\theta}U_j\right|_{\Ran\Pi_1^j}$ and $\left.e^{-i\theta}(T-\lambda_jI)\right|_{\Ran\Pi_1^j}$ are self-adjoint.
By (u$'_5$), $\,\sigma\l(\left.e^{-i\theta}U_j\right|_{\Ran\Pi_1^j}\right)\subset\{-1;1\}$. The spectrum of $\left.e^{-i\theta}(T-\lambda_jI)\right|_{\Ran\Pi_1^j}$ is a subset of $[-1,1]$. These inclusions imply that $\sigma\left(\re(e^{-i\theta}\left.G_t\right|_{\Ran\Pi_{1}^j})\right)$
lie outside the interval $(-\frac13,\frac13)$ for all $t>\frac23$. It follows that  $\sigma\left(\left.G_t\right|_{\Ran\Pi_{1}^j}\right)$ are subsets of $\C\setminus\O_{1/3}$ for all $t\in[0,1]$.

Thus we see that $\|G_t^{-1}\|\le3$. Therefore, the path \eqref{path} satisfies 
the conditions of Lemma \ref{gl0} and, consequently, $\diag_P(\mathcal T_j-\lambda_j I)\in \GL_0(\A\oplus \A)$ provided that $\delta$ is smaller than a constant depending only on $C$ and $\rho$.
\end{proof}

\subsection*{Proof or Theorem \ref{main_approx}}

Let 
$$
\mathcal G:=(\R+i\,6\Z)\cup (6\Z+i\,\R)\quad\text{and}\quad\mathcal L:=(6\Z+3)+i(6\Z+3)\,.
$$
The grid $\mathcal G$ splits the complex plane into closed squares $\Omega_j$ of size $6\times6$ with vertices in $6\Z+i\,6\Z$. The lattice $\mathcal L$ is formed by the centre points $z_j$ of $\Omega_j$, and $\mathcal G=\cup_j\partial\Omega_j$.

Let us fix a nondecreasing function $\psi\in C^\infty(\R)$
such that the derivative $\psi'$ is periodic with period 6 and 
$\psi(t)=6k$ for all $t\in[6k-\frac52\,,\,6k+\frac52]$ and all $k\in\Z$. Define 
$$
g(z):=\psi(\re z)+i\,\psi(\im z)\quad\text{and}\quad g_t(z)=(1-t)z+tg(z)\,.
$$ 
Since $g(z)-z\in C^2(\C)$, the functions $g_t$ are operator Lipschitz.  One can easily see that $\,g\left(\Omega_j\setminus\O_1(z_j)\right)\subset\partial\Omega_j\,$ and
\beq
\label{g-t}
g_t\left(\Omega_j\setminus\O_1(z_j)\right)\subset \Omega_j\setminus \O_1(z_j)\,,\quad \forall t\in[0,1]\,.
\eeq

Put $T_1:=6T$ and consider a closed square
$\Omega$ with vertices in $6\Z+i\,6\Z$, which contains  $\sigma(T_1)$. Let us assume that $d_2(T_1)=6d_2(T)<\delta'$, apply
Lemma \ref{manyholes} with $\lambda_j=z_j\in\Omega\cap\mathcal L$ to the operator $T_1$, and denote the obtained normal operator by $T'_1$. The operators $T_1$ and $T'_1$ satisfy the conditions (f$_1$)--(f$_5$). In particular, $\|[P,T'_1]\|\le 6C'd_2(T)$, where $C'$ is the constant from (f$_5$), and $\sigma(T'_1)\subset\C\setminus\left(\cup_j\O_1(z_j)\right)$, see Figure \ref{gammae}.

\begin{figure}[H]
	\begin{center}
		\includegraphics[scale=0.5]{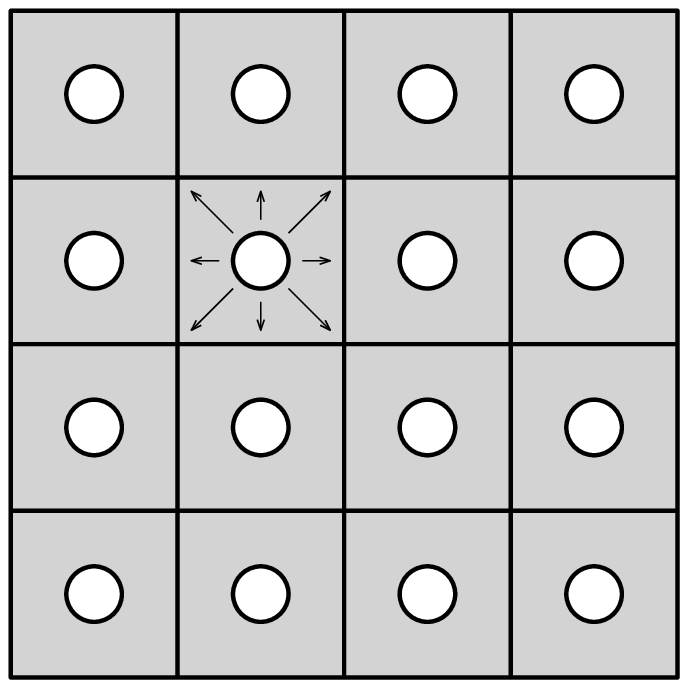}
		\hskip2cm
		\includegraphics[scale=0.5]{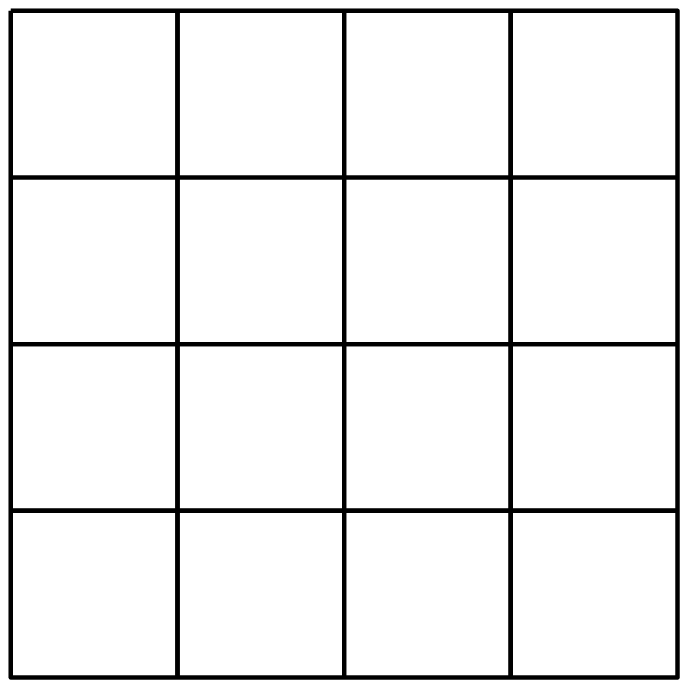}
	\end{center}
	\caption{The spectra of $T'_1$ and $T_2=g(T'_1)$}
	\label{gammae}
\end{figure}
Let $T_2=g(T'_1)$. Then $\sigma(T_2)\subset\mathcal G$ and, since $g$ is operator Lipschitz, $\|[P,T_2]\|\le C_\psi\,d_2(T)$ with a constant $C_\psi$ depending only on $C'$ and the choice of $\psi$. If $d_2(T)<C_\psi^{-1}$ then, in view of \eqref{g-t}, the paths $g_t(T'_1)-z_jI$ satisfy the conditions of Lemma \ref{gl0}. Hence $\diag_P(T_2-z_j I)\in \GL_0(\A\oplus \A)$ for all $z_j\in\Omega\cap\mathcal L$. Now, applying Lemma \ref{gl0} to the paths  $T_2-(tz +(1-t)z_j)I$, we see that $\diag_P(T_2-z I)\in \GL_0(\A\oplus \A)$ whenever $\dist(z,\mathcal G)\ge1$.

Let now $\lambda_j$ be the points of the union $(\mathcal L+3)\cup(\mathcal L+3i)$ lying in $\Omega$ (in other words, $\lambda_j$ are the middle points of the line segments forming the squares $\Omega_j$). If $d_2(T)$ is smaller than a constant depending only on the choice of $\psi$ then, by the above, the operators $T_2-\lambda_jI$ satisfy the conditions of Theorem \ref{t:u2}. Let us apply Lemma \ref{manyholes} to the operator $T_2$ and denote the obtained normal operator by $T'_2$. The spectrum $\sigma(T'_2)$ is also a subset of $\mathcal G$, but now it does not contain central parts of the edges of $\Omega_j$, see Figure \ref{f5}. By (f$_5$), we have $\|[P,T'_2]\|\le C'_\psi\,d_2(T)$ with a constant $C'_\psi$ depending only on $C'$ and $\psi$.

\begin{figure}[H]
\begin{center}
\includegraphics[scale=0.5]{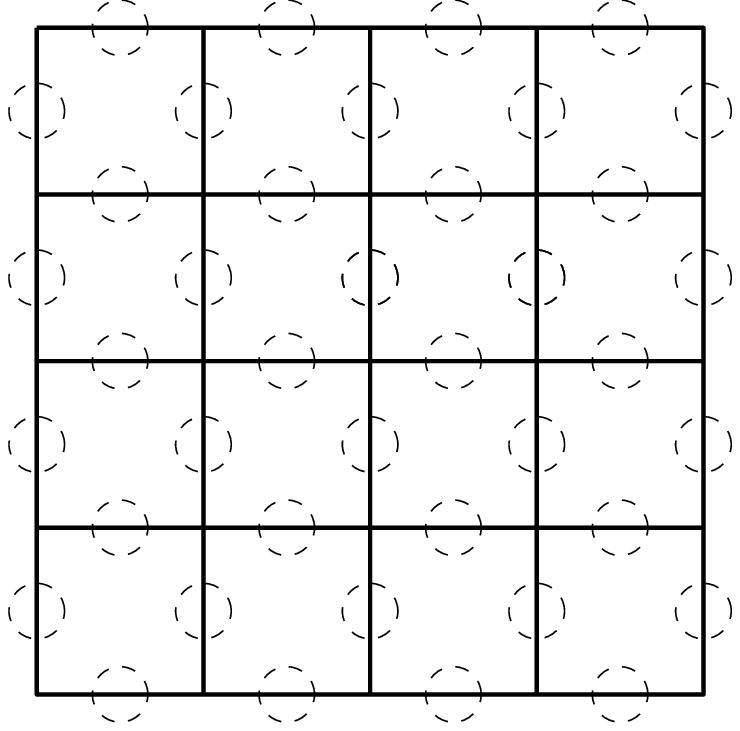}
\hskip2cm
\includegraphics[scale=0.5]{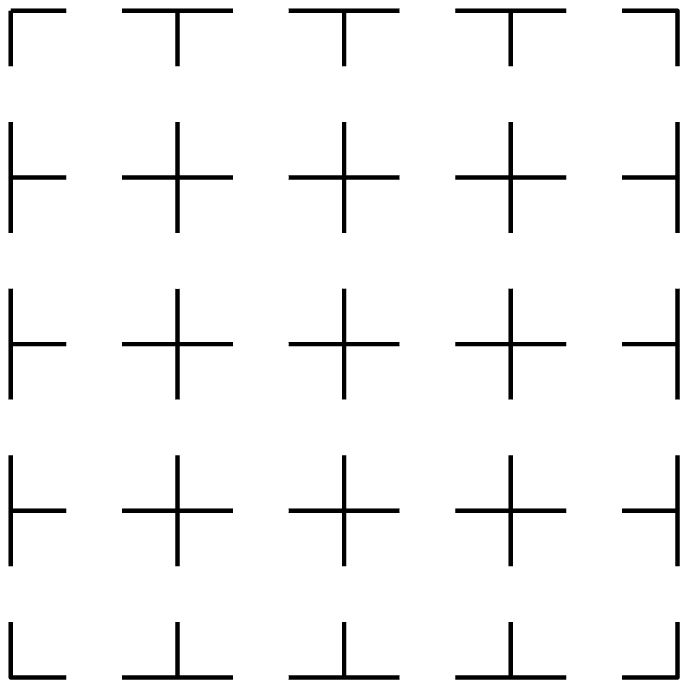}
\end{center}
\caption{The spectra of $T_2$ and $T'_2$}
\label{f5}
\end{figure}

The spectrum of $T'_2$ lies in the squares $\Omega'_k$ with edges of length 5 centred at the points $z'_k\in 6\Z+i\,6\Z$. On each of these squares the function $g$ is identically equal to $z'_k$. Thus the spectrum of $g(T'_2)$ is a finite subset of $6\Z+i\,6\Z$. 
Let $T_0=\frac16\,g(T'_2)$. Then $\sigma(T_0)\subset\Z+i\,\Z$. Since $g$ is operator Lipschitz, $\|[P,T_0]\|\leq C\,d_2(T)$ with a constant $C$ depending only on $C'$ in (f$_5$) and the choice $\psi$. Finally, the estimates (f$_1$) and $|g(z)-z|\le3\sqrt2$ imply that $\|T-T_0\|\le3$.
\qed

\section{Proofs of Theorem \ref{t:main} and Corollary \ref{c:bdf}}
\label{s:proofs}

\subsection{The upper bound \eqref{main-estimate1}}\label{s:upper-bound}

Denote $d'_1(A):=d_1(A)+\|[A,A^*]\|^{1/2}$.
Applying Theorem \ref{davidson} to the operator $A$, 
we find normal operators $N\in\A$ and $T\in \M2(\A)$ such that 
\beq
\label{estimates2}
d_2(T)\le C_1\,d'_1(A)
\quad\text{and}\quad
\|T-A\oplus N\|\le C_1\,d'_1(A).
\eeq
Here and further on, $C$ with a subscript denotes a constant independent of $\A$ and $A$.

Assume that $C_1\,d'_1(A)$ is smaller than the constant $\delta$ in Theorem \ref{main_approx}. Let $T_0$ be the normal operator with $\sigma(T_0)\subset\Z+i\,\Z$ given by that theorem, and let $PT_0P=X+iY$ where $X,Y$ are self-adjoint. We have $\|T-T_0\|\le 3$ and, in view of the first estimate \eqref{estimates2},
$\|T_0-\diag_PT_0\|=\|[P,T_0]\|\le C_2\,d'_1(A)$. These two inequalities,  the second estimate \eqref{estimates2} and the identity
$$
2i\,[X,Y]=[PT_0^*P,PT_0P]=PT_0^*(I-P)T_0P-PT_0(I-P)T_0^*P
$$ 
imply that
\beq
\label{estimates3}
\|X+iY-A\|\le 3+C_1\,d'_1(A) 
\quad\text{and}\quad
\|[X,Y]\|\leq C_3\left(d'_1(A)\right)^2.
\eeq
Assume also that $d'_1(A)\le(3C_2)^{-1}$, so that $\|T_0-\diag_PT_0\|\le \frac13$. Since $X$ is a block of $\re(\diag_P T_0)$ and  $\sigma(\re T_0)\subset \Z$, it follows that
\beq
\label{inclusion}
\sigma(X)\subset \sigma(\re(\diag_P T_0)) \subset \overbar\O_{1/3}(\Z)\cap\R\,,
\eeq
where $\overbar\O_{1/3}(\Z)$ is the closed $\frac13$-neighbourhood of $\Z$.

Let $\rho$ be a function satisfying the conditions of Lemma \ref{diagonalize} such that $\rho(t)=1$ for $|t|\le \frac13$ and $\rho(t)=0$ for $|t|\ge \frac23$.
If $\rho_n(t):=\rho(t-n)$ and  $Y'=\sum_{n\in\Z}\rho_n(X)Y\rho_n(X)$ then, by (d$_3$),
\beq
\label{estimates4}
\|Y-Y'\|\ \le\ C_\rho\,\|[X,Y]\|\ \le\ C_\rho\,C_3\left(d'_1(A)\right)^2.
\eeq

In view of \eqref{inclusion}, $\rho_n(X)$ coincides with the spectral projections of $X$ corresponding to the intervals $[n-\frac13,n+\frac13]$.
If $X':=\sum_{n\in\Z}n\,\rho_n(X)$ then $[X',Y']=0$. It follows that $X'+iY'$ is a normal operator whose spectrum lies on vertical line segments passing through real integers. 

Since $\A$ has real rank zero,  
$X'+iY'$ belongs to the closure of $\NN_f(\A)$. Now the estimates \eqref{estimates3}, \eqref{estimates4} and $\|X-X'\|\le\frac13$ imply that 
\beq
\label{estimates6}
\dist(A,\NN_f({\A}))\ \le\ 4+C_1\,d'_1(A)+C_\rho\,C_3\left(d'_1(A)\right)^2
\eeq
whenever $\,d'_1(A)\le\eps\,$, where $\eps$ is a constant depending only on the constants $\,\delta\,$ and $\,C\,$ in Theorems \ref{davidson} and  \ref{main_approx}. For every $A\in\A$, the operator $\eps\,(d'_1(A))^{-1}A$ satisfies the above condition. Substituting it into \eqref{estimates6}, we arrive at \eqref{main-estimate1}.

\subsection{The lower bound \eqref{main-estimate2}}\label{s:lower-bound}

Denote $d:=\dist(A,\NN_f({\A}))$, and let $N_\eps\in\NN_f({\A})$ be a normal operator with finite spectrum such that $\|A-N_\eps\|\le d+\eps$. If $R_\eps=A-N_\eps$ then $[A,A^*]=[R_\eps,A^*]+[A,R_\eps^*]-[R_\eps,R_\eps^*]$
and, consequently, $\|[A,A^*]\|\le4\|A\|\,(d+\eps)+(d+\eps)^2$. Since $d\le\|A\|$, letting $\eps\to0$, we obtain $\|[A,A^*]\|\le5\|A\|\,d$. Now \eqref{main-estimate2} follows from the inclusion $\NN_f({\A})\subset\overline{\GL_0(\A)}$ (see Remark \ref{gl-0}).

\subsection{Proof of Corollary \ref{c:bdf}}

Let $F$ be a continuous nondecreasing function on $\R_+$ such that $F(0)=0$ and $0\le F\le 1$. If 
$$
\dist\left(A,\NN({\A})\right)\ \le\  F\bigl(\|[A,A^*]\|\bigr)
$$
for any $C^*$-algebra $\A$ of real rank zero and all $A\in\A$ with $d_1(A)=0$ and $\|A\|\le1$ then, according to \cite[Theorem 3.8]{FS}, for every $\eps>0$ and
each $A\in\B(H)$ satisfying the conditions $d_1(A)=0$, $\|A\|\le1$, there
exists a normal operator $\,A_\eps\in\B(H)\,$ such that
\begin{equation}
\label{bdf1}
\begin{aligned}
&\|A-A_\eps\|_{\ess}\le 2F\bigl(\|[A^*,A]\|_{\ess}\bigr),\\
&\|A-A_\eps\|\le 5F\bigl(\|[A^*,A]\|\bigr)+
3F\Bigl(2F\bigl(\|[A^*,A]\|_{\ess}\bigr)\Bigr)+\eps\,.
\end{aligned}
\end{equation}
Since $\dist\left(A,\NN({\A})\right)\le\|A\|$, the estimate \eqref{main-estimate1} implies that the function $F(t)=\min\{Ct^{1/2},1\}$ satisfies the above conditions. Applying \eqref{bdf1} with this $F$ and $\eps=\|A\|^{-1}\|[A,A^*]\|^{1/2}$ to the operator $\|A\|^{-1}A$, we obtain \eqref{main-bdf}.

\begin{remark}
\label{bdf_rem}
If $\|(A-\lambda I)^{-1}\|<\left(\dist(\lambda,\Omega)-\eps\right)^{-1}$ whenever $\dist(\lambda,\Omega)>\eps$ and $\|[A^*,A]\|^{1/2}\le\eps$ then the spectrum of the normal operator $A'$ given by Corollary \ref{c:bdf} lies in a $C\er$-neighbourhood of $\Omega$. Therefore it can be approximated by a normal operator with spectrum in $\Omega$.
\end{remark}

\end{document}